\newtheorem{lemma}{Lemma}[section]
\newtheorem{thm}[lemma]{Theorem}
\newtheorem{proposition}[lemma]{Proposition}
\newtheorem{corollary}[lemma]{Corollary}
\theoremstyle{definition}
\newtheorem*{example}{Example}
\theoremstyle{remark}
\newtheorem{remark}{Remark}
\numberwithin{equation}{section}
\newcommand{\R}{{\mathbb R}}
\newcommand{\N}{{\mathbb N}}
\newcommand{\E}{{\mathcal E}}
\newcommand{\se}{\sigma_{\!\mathrm{ess}}}
\newcommand{\Deg}{{\mathrm{Deg}}}
\newcommand{\supp}{{\mathrm {supp}\,}}
\newcommand{\al}{{\alpha}}
\newcommand{\de}{{\delta}}
\newcommand{\eps}{{\varepsilon}}
\newcommand{\ph}{{\varphi}}
\newcommand{\lm}{{\lambda}}
\newcommand{\Gm}{{\Gamma}}
\newcommand{\si}{{\sigma}}
\newcommand{\as}[1]{\left\langle #1\right\rangle}
\newcommand{\aV}[1]{\left\Vert #1\right\Vert}
\newcommand{\ow}[1]{\widetilde{ #1}}
\newcommand{\Hm}[1]{\leavevmode{\marginpar{\tiny%
$\hbox to 0mm{\hspace*{-0.5mm}$\leftarrow$\hss}%
\vcenter{\vrule depth 0.1mm height 0.1mm width \the\marginparwidth}%
\hbox to 0mm{\hss$\rightarrow$\hspace*{-0.5mm}}$\\\relax\raggedright
#1}}}
\begin{document}

\title[Volume growth and bounds for the essential spectrum]{Volume growth and bounds for the essential spectrum for Dirichlet forms}

\author{Sebastian Haeseler}
\address{Mathematisches Institut \\
Friedrich Schiller Universit{\"a}t Jena \\
07743 Jena, Germany }
\email{sebastian.haeseler@uni-jena.de}

\author{Matthias Keller}
\address{Mathematisches Institut \\
Friedrich Schiller Universit{\"a}t Jena \\
07743 Jena, Germany }
\email{m.keller@uni-jena.de}

\author{Rados{\l}aw K. Wojciechowski}
\address{York College of the City University of New York \\
Jamaica, NY 11451 \\ USA  }
\email{rwojciechowski@gc.cuny.edu}


\begin{abstract}
We consider operators arising from regular Dirichlet forms with vanishing killing term. We give bounds for the bottom of the (essential) spectrum in terms of exponential volume growth with respect to an intrinsic metric. As special cases we discuss operators on graphs. When the volume growth is measured in the natural graph distance (which is not an intrinsic metric) we discuss the threshold for positivity of the bottom of the  spectrum and finiteness of the bottom of the essential spectrum of the (unbounded) graph Laplacian. This threshold is shown to lie at cubic polynomial growth.
\end{abstract}

\maketitle

\section{Introduction and Main Results}
In 1981 Brooks proved that the bottom of the essential spectrum of the Laplace Beltrami operator on a complete non compact Riemannian manifold with infinite measure can be bounded by the exponential volume growth rate of the manifold \cite{Br}.   Following this, similar results were proven in various contexts, see \cite{DK,Fuj,Hi,Hi2,OU,Stu}. Very recently it was shown in \cite{KLW} that such a result fails to be true in the case of the (non-normalized) graph Laplacian when the volume is measured with respect to the natural graph distance. Indeed, there are graphs of cubic polynomial volume growth that have positive bottom of the spectrum and slightly more than cubic growth already allows for purely discrete spectrum. This suggests that one should look for other candidates for a metric on a graph.

In this work we use the context of regular Dirichlet forms (without killing term) and the corresponding concept of  intrinsic metrics, see \cite{Stu} and \cite{FLW}, to prove a Brooks-type theorem. The purpose of this approach is threefold. First, we provide a set up which includes all known examples (and various others, e.g., quantum graphs) and give a unified treatment. Additionally, our estimates are slightly better than most of the previous results. Secondly, our method of proof seems to be much clearer and simpler than most of the previous works. Finally, graph Laplacians are now included and the disparity discussed above is resolved by considering suitable metrics. As an application, we can now prove that the examples found in \cite{KLW} for Laplacians on graphs do indeed give the borderline for positive bottom of the spectrum.  In particular, for the natural graph distance the threshold for zero bottom of the essential spectrum and the discreteness of the spectrum lies at cubic growth.

\smallskip
Let $X$ be a locally compact separable metric space and $m$ a positive Radon measure of full support.
Let $\E$ be a closed, symmetric, non-negative form on the Hilbert space $L^{2}(X,m)$ of real-valued square integrable functions with domain $D$.  We assume that $\E$ is a regular Dirichlet form without killing term (for background on Dirichlet forms see \cite{Fuk}, more details are given in Section~\ref{s:DF}). Let $L$ be the positive self adjoint operator arising from $\E$. Define
\begin{align*}
 \lm_{0}(L):=\inf\si(L)\quad\mbox{and}\quad   \lm_{0}^{\mathrm{ess}}(L):=\inf\se(L)
\end{align*}
where $\se(L)$ denotes the essential spectrum of $L$.

We let $\rho$ be an intrinsic pseudo metric in the sense of \cite{FLW}. For $x_{0}\in X$ and $r\geq 0$, we define the distance ball $B_{r}=B_{r}(x_{0})=\{x\in X\mid \rho(x,x_{0})\leq r\}$.
Let the \emph{exponential volume growth} be defined as
\begin{align*}
\mu=\liminf_{r\to\infty}\frac{1}{r}\log m(B_{r}(x_{0})).
\end{align*}
Note that, in contrast to previous works on manifolds \cite{Br}, graphs \cite{Fuj} and strongly local forms \cite{Stu}, we consider a $\liminf$ here, rather than a $\limsup$.

If $\rho$ takes values in $[0,\infty)$, then $X=\bigcup_{r} B_{r}(x_{0})$. In this case $\mu$ does not depend on the particular choice of $x_{0}$.
There is another constant first introduced in \cite{Stu} which we call the  \emph{minimal exponential volume growth} and which is defined as
\begin{align*}
\ow\mu=\liminf_{r\to\infty}\frac{1}{r}\inf_{x\in X}\log \frac{m(B_{r}(x))}{m(B_{1}(x))}.
\end{align*}

In this paper we prove the following theorem.

\begin{thm}\label{t:main} Let $L$ be the positive self adjoint operator arising from a regular Dirichlet form $\E$ without killing term and let $\rho$ be an intrinsic metric such that all distance balls are compact. Then,
\begin{align*}
\lm_{0}(L)\leq \frac{\ow{\mu}^{2}}{4}.
\end{align*}
If additionally $m(\bigcup_{r}B_{r}(x_{0}))=\infty$ for some $x_{0}$, then
\begin{align*}
    \lm_{0}^{\mathrm{ess}}(L)\leq \frac{\mu^{2}}{4}.
\end{align*}
\end{thm}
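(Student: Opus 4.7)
\emph{Plan.} The starting point is the variational characterization
\[
\lm_{0}(L)=\inf_{0\neq f\in D}\frac{\E(f,f)}{\aV{f}^{2}}
\]
together with Persson's theorem, which represents $\lm_{0}^{\mathrm{ess}}(L)$ as the supremum, over compacta $K\subset X$, of the same infimum taken only over $f\in D$ with $\supp f\subset X\setminus K$. Both bounds will be obtained by plugging in radial test functions $f(x)=g(\rho(x_{0},x))$ whose one-dimensional profile $g$ is tuned to the exponential growth rate.

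\emph{Key input from the intrinsic metric.} The whole geometric content reduces to a single estimate: for any Lipschitz $g:[0,\infty)\to\R$ of compact support,
\[
\E(g\circ\rho(x_{0},\cdot),g\circ\rho(x_{0},\cdot))\leq\int_{X}|g'(\rho(x_{0},x))|^{2}\,dm(x).
\]
This follows from the chain rule for regular Dirichlet forms together with the fact that, by definition of an intrinsic metric in \cite{FLW}, the associated energy measure of $\rho(x_{0},\cdot)$ is dominated by $m$. Compactness of distance balls places the functions $g\circ\rho(x_{0},\cdot)$ in the form domain.

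\emph{Main calculation.} For the bound on $\lm_{0}$, fix $\al>\ow\mu/2$, choose base points $y_{n}\in X$ and radii $r_{n}\to\infty$ that almost realize the infimum appearing in $\ow\mu$, and take $g_{n}$ equal to $e^{-\al s}$ on $[0,r_{n}/2]$ and linearly interpolated down to $0$ on $[r_{n}/2,r_{n}]$. Rewriting the chain-rule estimate via the layer-cake formula
\[
\int e^{-2\al\rho(y_{n},\cdot)}\,dm=2\al\int_{0}^{\infty}e^{-2\al s}m(B_{s}(y_{n}))\,ds
\]
reduces the Rayleigh quotient to a one-dimensional integral in $s$, and the choice of $y_{n},r_{n}$ makes the cut-off annulus contribute only a term of order $m(B_{r_{n}}(y_{n}))e^{-\al r_{n}}\to 0$, so that $\E(f_{n},f_{n})/\aV{f_{n}}^{2}\to\al^{2}$. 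Letting $\al\downarrow\ow\mu/2$ yields $\lm_{0}(L)\leq\ow\mu^{2}/4$. For $\lm_{0}^{\mathrm{ess}}$, I replace the one-sided profile by a two-sided hat supported on an annulus $[R_{n},R_{n}+r_{n}]$ with $R_{n}\to\infty$; the hypothesis $m(\bigcup_{r}B_{r}(x_{0}))=\infty$ together with compactness of balls guarantees that such annuli can be placed outside any prescribed compact set, and Persson's criterion combined with the same one-dimensional analysis delivers $\lm_{0}^{\mathrm{ess}}(L)\leq\mu^{2}/4$.

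\emph{Main obstacle.} The chief technical difficulty is that $\mu$ and $\ow\mu$ are $\liminf$'s rather than $\limsup$'s, so there is no uniform upper bound on $m(B_{r})$ and in particular $e^{-\al\rho}$ may fail to be square-integrable even for $\al>\mu/2$. This is precisely what forces the cut-off in $g_{n}$ and the careful selection of truncation radii along the subsequence realizing the $\liminf$, so that the boundary contribution from the cut-off is absorbed by the main exponential mass. Once this selection is arranged, the remaining work is an elementary one-dimensional computation.
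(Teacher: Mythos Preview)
Your ``key input'' --- the inequality $\E(g\circ\rho(x_{0},\cdot))\leq\int_{X}|g'(\rho(x_{0},x))|^{2}\,dm(x)$ for Lipschitz profiles $g$ --- is the crux of the argument, and it is not available for non-local Dirichlet forms. The chain rule $\Gm(g\circ u)=|g'(u)|^{2}\Gm(u)$ holds only for the strongly local part $\Gm^{(c)}$; for the jump part one has
\[
\int_{X} d\Gm^{(j)}(g\circ\rho)=\int_{X\times X\setminus d}\big(g(\rho(x_{0},x))-g(\rho(x_{0},y))\big)^{2}\,dJ(x,y),
\]
and the increment $g(\rho(x_{0},x))-g(\rho(x_{0},y))$ is controlled by $g'$ at some intermediate point, not by $g'(\rho(x_{0},x))$. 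Think of a profile whose derivative is concentrated on a thin shell: your right-hand side feels only that shell, while the jump integral sees every pair $(x,y)$ that straddles it. Since the theorem is stated for regular Dirichlet forms with arbitrary jump part, this is a genuine gap, not a technicality.

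The paper replaces the chain rule by a pointwise Lipschitz inequality tailored to a specific exponential profile. With $f=f_{r,x_{0},\al}$ equal to $e^{\al r}-1$ on $B_{r}$, to $e^{\al(2r-\rho)}-1$ on $B_{2r}\setminus B_{r}$, and to $0$ outside, Lemma~\ref{l:f_Lip} proves
\[
(f(x)-f(y))^{2}\leq\tfrac{\al^{2}}{2}\big(g(x)^{2}+g(y)^{2}\big)\rho(x,y)^{2},\qquad g:=(f+2)1_{B_{2r}},
\]
for \emph{all} pairs $x,y$; integrating against $dJ$ and using the intrinsic-metric bound $\int_{A\times X\setminus d}\rho^{2}\,dJ\leq m^{(j)}(A)$ yields the jump estimate with the same constant $\al^{2}$ as the strongly local part. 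The auxiliary $g$ is bounded below by $2$ on $B_{2r}$, which is exactly what makes the inequality survive near the outer edge where $f\to 0$; one then shows $\|g\|/\|f\|\to 1$ along a subsequence realising the $\liminf$ (Lemma~\ref{l:f}). Your linear cut-off does not admit such an inequality: on the ramp the ratio $(g_{n}(s)-g_{n}(t))^{2}/\big((s-t)^{2}(g_{n}(s)^{2}+g_{n}(t)^{2})\big)$ blows up as $s,t$ approach the outer radius, so the jump energy there cannot be absorbed into $\al^{2}\|f_{n}\|^{2}$ by this mechanism.

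A secondary methodological difference: for $\lm_{0}^{\mathrm{ess}}$ you invoke the compact-complement form of Persson's theorem and introduce a second family of annular test functions. The paper instead shows that the \emph{same} sequence $f_{r_{k}}/\|f_{r_{k}}\|$ converges weakly to $0$ (this is where $m(\bigcup_{r}B_{r})=\infty$ enters, Lemma~\ref{l:f}(b)) and applies the elementary Proposition~\ref{p:h}, avoiding a second energy computation altogether.
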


This has the following immediate corollary. The corollary has various  consequences, for example, the exponential instability of the semigroup $(e^{-tL})_{t\geq0}$ on $L^{p}(X,m)$, $p\in [1,\infty]$, see \cite[Corollary~2]{Stu}.

\begin{corollary} Suppose that $(X,d)$ is of subexponential growth, i.e., $\ow\mu=0$ (respectively, $\mu=0$). Then, $\lm_{0}(L)=0$ (respectively, $\lm_{0}^{\mathrm{ess}}(L)=0$).
\end{corollary}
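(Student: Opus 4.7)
The plan is to deduce both equalities as immediate consequences of Theorem~\ref{t:main}, so the proof should be extremely short. Since $L$ is the positive self-adjoint operator arising from the Dirichlet form $\E$, its spectrum lies in $[0,\infty)$, and hence both
\begin{align*}
\lm_{0}(L)=\inf\si(L)\ge 0 \qquad\text{and}\qquad \lm_{0}^{\mathrm{ess}}(L)=\inf\se(L)\ge 0,
\end{align*}
with the convention that the essential spectrum being empty would make $\lm_{0}^{\mathrm{ess}}(L)=+\infty$. This provides the trivial lower bounds.

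For the first assertion, I would simply invoke the first half of Theorem~\ref{t:main}: the hypothesis $\ow\mu=0$ forces $\lm_{0}(L)\le \ow\mu^{2}/4=0$, and combining this with $\lm_{0}(L)\ge 0$ gives $\lm_{0}(L)=0$. For the second assertion, note that the corollary is to be read under the standing hypotheses of Theorem~\ref{t:main}, so in particular the additional assumption $m(\bigcup_{r}B_{r}(x_{0}))=\infty$ is in force; the second half of Theorem~\ref{t:main} then yields $\lm_{0}^{\mathrm{ess}}(L)\le \mu^{2}/4=0$. Since an upper bound $\le 0$ together with $L\ge 0$ rules out the pathological case $\se(L)=\emptyset$, we conclude $\lm_{0}^{\mathrm{ess}}(L)=0$ (and in particular $0\in\se(L)$).

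There is essentially no obstacle here: the only point that requires a word is the observation that $L\ge 0$ gives the matching lower bound, and that the essential spectrum bound in Theorem~\ref{t:main} implicitly rules out the empty-essential-spectrum edge case. The proof therefore reduces to citing Theorem~\ref{t:main} twice, once for each parenthetical assertion.
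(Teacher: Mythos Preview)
Your proposal is correct and matches the paper's approach: the corollary is stated immediately after Theorem~\ref{t:main} with the words ``This has the following immediate corollary,'' and no separate proof is given, precisely because the argument is the one you wrote---combine the upper bounds from Theorem~\ref{t:main} with the trivial lower bound coming from $L\ge 0$. Your remark that the essential-spectrum assertion tacitly carries the hypothesis $m(\bigcup_{r}B_{r}(x_{0}))=\infty$ from Theorem~\ref{t:main} is also the intended reading.
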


\begin{remark}(a) Let us discuss Theorem~\ref{t:main} in the perspective of the present literature: For the Laplace Beltrami operator on a Riemannian manifolds an estimate for $\lm_{0}^{\mathrm{ess}}$ can be found in \cite{Br}, see also \cite{Hi2}. In \cite{Stu} the statement for $\lm_{0}$ is proven for strongly local Dirichlet forms. For non-local operators such results were known only for normalized Laplacians on graphs, see \cite{DK,Fuj,Hi,OU}.  These operators are of a very special form, in particular, they are always bounded. For unbounded Laplacians on graphs the conclusions of the theorem do not hold if one considers volume with respect to the natural graph metric, see \cite{KLW}. However, by \cite{FLW} (see also \cite{GHM}), there is  now a suitable notion of intrinsic metric for non-local forms. Let us stress that our result covers the results in \cite{Br,DK,Fuj,OU,Stu}. The results of type \cite{Hi,Hi2} could certainly also be obtained with slightly more technical effort which we avoid here for clarity of presentation.

(b) Despite the fact that our result is much more general, we have a unified method of proof for the bounds on the spectrum and the essential spectrum. Moreover, for  the essential spectrum, the proof is significantly simpler than the one of \cite{Br,Fuj} as we use test functions that converge weakly to zero and, therefore, avoid a cut-off procedure.

(c) Indeed, we prove a slightly more general result than above for non-local forms in Section~\ref{s:nonlocal}. In particular, for some special cases we prove much better estimates and recover the results of \cite{DK,Fuj,OU} in Corollary~\ref{c:normalized} in  Section~\ref{s:graph}.

(d) If we assume that $\rho$ takes values in $[0,\infty)$, then we can clearly replace the assumption that $m(\bigcup_{r}B_{r}(x_{0}))=\infty$ with $m(X)=\infty$.  The case when $m(X) < \infty$ is notably different, see \cite{HKLW2} for more details.

(e) If $\inf_{x\in X}m(B_{1}(x))>0$, then one can also show that $ \lm_{0}^{\mathrm{ess}}(L)\leq {\ow\mu^{2}}/{4}$.

(f) Our result deals exclusively with Dirichlet forms with vanishing killing term. The major challenge in the case of non vanishing killing term is to give a proper definition of volume which incorporates the killing term. We shortly discuss a strategy of how one could approach this case:  We  need  an positive generalized harmonic function $u$, i.e., $\E(u,\ph)=0$ for all $\ph\in D$, where $u$ is assumed to be  locally in the domain of $\E$ (this space is introduced in \cite{FLW} as $\mathcal{D}_{\mathrm{loc}}^{*}$). Such a function exists in many settings, see e.g. \cite{DK,HK, LSV}, and the result which guarantees the existence of such a function is often referred to as a Allegretto-Piepenbrink type theorem.  Then, by a ground state representation, see Theorem~10.1 \cite{FLW}, one obtains a form $\E_{u}$ with vanishing killing term such that $\E=\E_{u}$ on the intersection of their domains. Now, we can apply the methods above for $\E_{u}$ to derive the result for $\E$. However, as shown in \cite{HK}, there are examples of non-locally finite weighted graphs that do not have such a generalized harmonic function. Therefore, it would be interesting to find sufficient conditions under which the approach above can be carried out.
\end{remark}

Let us highlight one of the applications of our results for graphs. Let $\Delta $ be the graph Laplacian on $\ell^{2}(X)$ acting as
\begin{align*}
\Delta\ph(x)=    \sum_{y \sim x}(\ph(x)-\ph(y))
\end{align*}
(for more details, see Sections~\ref{s:graph} and~\ref{s:graph2}). Moreover, let $B_{r}^{d}$, for $r\geq0$, be balls with respect to the natural graph distance $d$ defined as the length of the shortest path of edges between two vertices.  It has to be stressed that this metric is not an intrinsic metric for $\Delta$. However, we will show in Theorem~\ref{t:graph2} that, if the growth of the balls $B_{r}^{d}$ is $r^{3-\eps}$ for any $\eps>0$, then $\lm_{0}(\Delta)=\lm_{0}^{\mathrm{ess}}(\Delta)=0$ and if it is less than $r^{3}$, then $\lm_{0}^{\mathrm{ess}}(\Delta)<\infty$. We demonstrate by examples that this result is sharp, see Section~\ref{s:graph2}.

\smallskip

The paper is structured as follows. In Section~\ref{s:Preliminaries} we recall some basic facts about Dirichlet forms and intrinsic metrics. Moreover, we give a bound on the bottom of the essential spectrum via weak null sequences and introduce the test functions. In Section~\ref{s:proof} we prove the crucial estimate for the strongly local and the non-local parts of the Dirichlet form and prove the main theorem. In Section~\ref{s:applications} we discuss the result for weighted graphs and prove the polynomial growth bound discussed above. 

Note added: After this work was completed we learned about the very recent preprint of Matthew Folz ``Volume growth and spectrum for general graph Laplacians" which contains related material in the special case of graphs.

\section{Preliminaries}\label{s:Preliminaries}
In this section we introduce the basic notions and concepts. The first subsection is devoted to recalling the setting of Dirichlet forms. In the second subsection we prove an estimate for the bottom of the essential spectrum and in the third subsection we discuss the basic properties of the test functions that are used to prove our result.

\subsection{Dirichlet forms}\label{s:DF}

In this section we recall some elementary facts about Dirichlet forms, see e.g. \cite{Fuk} and, for recent work on non-local forms, \cite{FLW}.

As above let $X$ be a locally compact separable metric space and let $m$  be a positive Radon measure of full support. We consider all functions on $X$ to be real-valued, but, by complexifying the corresponding Hilbert spaces and forms, we could also consider complex-valued functions.  A closed non-negative form on $L^{2}(X,m)$ consists of a dense subspace $D\subseteq L^{2}(X,m)$ and a sesqui-linear non-negative map $\E:D\times D\to\R$ such that $D$ is complete with respect to the form norm $\|\cdot\|_{\E}=\sqrt{\E(\cdot,\cdot)+\|\cdot\|^{2}}$ where $\|\cdot\|$ always denotes the $L^{2}$ norm.
We write $\E(u):=\E(u,u)$ for $u\in D$.

A closed non-negative form $(\E,D)$ is called a\emph{ Dirichlet form }if for any $u\in D$ and any normal contraction  $c:\R\to\R$ we have $c\circ u\in D$ and $\E(c\circ u)\leq \E(u)$. Here, $c$ is a normal contraction if $c(0)=0$ and $|c(x)-c(y)|\leq|x-y|$ for $x,y\in\R$. A Dirichlet form is called \emph{regular} if $D\cap C_{c}(X)$ is dense both in $(D,\|\cdot\|_{\E})$ and $(C_{c}(X),\|\cdot\|_{\infty})$ where $C_{c}(X)$ is the space of continuous compactly supported functions.

A function $f:X\to\R$ is said to be \emph{quasi continuous} if for every $\eps>0$ there is an open set $U\subseteq X$ with
\begin{align*}
    \mathrm{cap}(U):=\inf\{\|v\|_{\E}\mid v\in D,\, 1_{U}\leq v\}\leq \eps,
\end{align*}
 such that $f\vert _{X\setminus U}$ is continuous (where $\inf\emptyset=\infty$ and $1_{U}$ is the characteristic function of $U$). For a regular Dirichlet form $(\E,D)$ every $u\in D$ admits a quasi continuous representative, see \cite[Theorem 2.1.3]{Fuk}. In the following we assume that when considering $u$ as a function we always choose a quasi continuous representative.

There is a fundamental representation theorem for regular Dirichlet forms called the Beurling-Deny formula, see \cite[Theorem 3.2.1.]{Fuk}.  It states that there is a non-negative Radon measure $k$ on $X$, a non-negative Radon measure $J$ on $X\times X\setminus d$ which is $X\times X$ without the diagonal $d:=\{(x,x)\mid x\in X\}$ and a  positive semi-definite bilinear form $\Gm^{(c)}$ on $D\times D$ with values in the signed Radon measures on $X$ which is\emph{ strongly local}, i.e., satisfies $\Gm^{(c)}(u,v)=0$ if $u$ is constant on the support of $v$, such that
\begin{align*}
    \E(u)=\int_{X}d\Gm^{(c)}(u)+\int_{X\times X\setminus d} (u(x)-u(y))^{2}dJ(x,y)+\int_{X}u(x)^{2}dk(x),
\end{align*}
where we choose a quasi continuous representative of $u$ in the second and third integral. The first term on the right hand side is called the \emph{strongly local part} of $\E$, the second term is called the \emph{jump part} and the third term is called the \emph{killing term}.
The measure $J$ gives rise to a Radon measure $\Gm^{(j)}$ (where the $j$ refers to `jump') which is characterized by
\begin{align*}
    \int_{K}d\Gm^{(j)}(u)=\int_{K\times X\setminus d}(u(x)-u(y))^{2}dJ(x,y)
\end{align*}
for $K\subseteq X$ compact and $u\in D$. The focus of this paper is on regular Dirichlet forms $\E$ without killing term, i.e., $k\equiv0$. Thus, we denote
\begin{align*}
    \Gm=\Gm^{(c)}+\Gm^{(j)}.
\end{align*}

The space $D_{\mathrm{loc}}^{*}$ of \emph{functions locally in the domain} of $\E$ was introduced in \cite{FLW} and is important for the definition of intrinsic metrics. It is defined as the set of functions $u\in L^{2}_{\mathrm{loc}}(X,m)$ such that for all open and relatively compact sets $G$ there is a function $v\in D$ such that $u$ and $v$ agree on $G$ and for all compact $K\subseteq X$
\begin{align*}
    \int_{K\times X\setminus d}(u(x)-u(y))^{2}dJ(x,y)<\infty.
\end{align*}
We can extend  $\Gm^{(c)}$ and $\Gm^{(j)}$ to $D_{\mathrm{loc}}^{*}$, see \cite[Remarks after the proof of Theorem~3.2.1.]{Fuk} and \cite[Proposition~3.3]{FLW}.

For the strongly local part we have 
a \emph{chain rule} (see \cite[Theorem 3.2.2.]{Fuk}) as follows: for $\ph:\R\to\R$ continuously differentiable with bounded derivative $\ph'$,
\begin{align*}
    \Gm^{(c)}(\ph(u),v) = \ph'(u)\Gm^{(c)}(u,v),\quad u,v\in     D_{\mathrm{loc}}^{*}\cap L^{\infty}(X,m).
\end{align*}

A \emph{pseudo metric} is a map $\rho:X\times X\to[0,\infty]$ which is symmetric, satisfies the triangle inequality and $\rho(x,x)=0$ for all $x\in X$. For $A\subseteq X$ we define  the map $\rho_{A}:X\to[0,\infty]$ by
\begin{align*}
    \rho_{A}(x)=\inf_{y\in A}\rho(x,y).
\end{align*}
If $\rho$ is a pseudo metric and $T>0$, then $\rho\wedge T$ is a pseudo metric and we have that $(\rho\wedge T)_{A}=\rho_{A} \wedge T$ and $|\rho_{A}(x)\wedge T-\rho_{A}(y)\wedge T|\leq\rho(x,y)$.

By \cite[Definition~4.1.]{FLW} a pseudo metric $\rho$ is called an\emph{ intrinsic metric} for the Dirichlet form $\E$ if there are Radon measures $m^{(c)}$ and $m^{(j)}$ with  $m^{(c)}+m^{(j)}\leq m$ such that for all $A\subseteq X$ and all $T>0$ the functions $\rho_{A}\wedge T$ are in $D_{\mathrm{loc}}^{*}\cap C(X)$ and satisfy
\begin{align*}
    \Gm^{(c)}(\rho_{A}\wedge T)\leq m^{(c)}\quad\mbox{and}\quad\Gm^{(j)}(\rho_{A}\wedge T)\leq m^{(j)}.
\end{align*}
This implies that if $A\subseteq X$ is such that $\rho_{A}(x)<\infty$ for all $x\in X$, then
$\rho_{A}\in D_{\mathrm{loc}}^{*}\cap C(X)$ and $\Gm(\rho_{A})\leq m$. We assume that $\rho$ is continuous with respect to the original topology.


\subsection{An estimate for the bottom of the essential spectrum}
The following Persson-type theorem seems to be standard in some settings, see \cite{Per, Gri}. However, since we are not able to find a proper reference in the literature which covers our case, we include a short proof.

\begin{proposition}\label{p:h} 
Let $h$ be a closed quadratic form on $L^{2}(X,m)$ that is bounded from below and let $H$ be the corresponding self adjoint operator.  Assume that there is a normalized sequence $(f_{n})$ in $D(h)$ that converges weakly to zero. Then,
\begin{align*}
\lm_{0}^{\mathrm{ess}}(H)\leq\liminf_{n\to\infty} h(f_{n}).
\end{align*}
\end{proposition}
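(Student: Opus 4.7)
The natural approach is proof by contradiction using the spectral theorem for $H$. I would assume the inequality fails, so that $\Lambda := \lm_{0}^{\mathrm{ess}}(H) > \liminf_{n} h(f_{n})$, and pick $\lm$ strictly between these two values. Since $H$ is bounded below and $\lm < \Lambda$, the spectrum $\si(H)$ intersects $(-\infty, \lm]$ only in finitely many eigenvalues of finite multiplicity, so the spectral projection $P := E_{H}((-\infty, \lm])$ has finite rank.

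Next I would exploit the weak convergence: a finite-rank operator is compact, so $f_{n} \rightharpoonup 0$ forces $\|Pf_{n}\| \to 0$, whence $\|(I-P)f_{n}\|^{2} \to 1$. Fix $c > -\inf h$ so that $h+c \geq 0$. Since $P$ is a function of $H$, it commutes with $(H+c)^{1/2}$, leaves the form domain $D(h) = D((H+c)^{1/2})$ invariant, and acts as an orthogonal projection on the Hilbert space $(D(h), \as{\cdot,\cdot}_{h+c})$. This yields the orthogonal decomposition
\[
(h+c)(f_{n}) = (h+c)(Pf_{n}) + (h+c)((I-P)f_{n}).
\]
By the functional calculus $H \geq \lm$ on $\ran(I-P)$, so $(h+c)((I-P)f_{n}) \geq (\lm+c)\|(I-P)f_{n}\|^{2}$, while $(h+c)(Pf_{n}) \geq 0$. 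Combining the two estimates,
\[
h(f_{n}) \geq \lm \|(I-P)f_{n}\|^{2} - c\|Pf_{n}\|^{2} \longrightarrow \lm,
\]
which gives $\liminf_{n} h(f_{n}) \geq \lm$ and contradicts the choice of $\lm$.

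The only delicate point is the orthogonal splitting of the shifted form between $Pf_{n}$ and $(I-P)f_{n}$; this is where the interplay between the form and the spectral projection enters, and it follows from writing $(h+c)(u) = \|(H+c)^{1/2}u\|^{2}$ together with the commutation of $P$ and $(H+c)^{1/2}$ as functions of $H$. Everything else is a routine application of the spectral theorem plus the elementary fact that finite-rank operators send weakly null sequences to norm-null sequences.
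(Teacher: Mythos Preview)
Your proof is correct and follows essentially the same strategy as the paper: both use that the spectral projection onto $(-\infty,\lambda]$ is finite rank for $\lambda<\lm_0^{\mathrm{ess}}(H)$, hence sends the weakly null $(f_n)$ to a norm-null sequence, and then bound the form from below on the complementary part via the spectral theorem. Your argument is in fact slightly more streamlined, working directly with $f_n\in D(h)$ through the identity $(h+c)(u)=\|(H+c)^{1/2}u\|^2$ and the commutation of $P$ with $(H+c)^{1/2}$, whereas the paper first approximates each $f_n$ in form norm by some $g_n\in D(H)$ and introduces an auxiliary threshold $\lambda_1\in(\lambda,\lm_0^{\mathrm{ess}}(H))$ before optimizing over an $\eps$.
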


\begin{proof} Without loss of generality assume that $h\geq0$ and that $\lm_0^{\mathrm{ess}}(H)>0$.
Let $0< \lm<\lm_{0}^{\mathrm{ess}}(H)$. We will show that there is an $N\geq0$ such that $h(f_{n})>\lm$ for all $n \geq N$. Let $\lm_{1}$ be such that $\lm<\lm_{1}<\lm_{0}^{\mathrm{ess}}(H)$ and let $\eps>0$ be arbitrary. Since $D(H)$ is a core for $D(h)$ there exist $g_{n}\in D(H)$ for all $n\geq0$ such that $\aV{f_{n}-g_{n}}_{h}^{2}=h(f_{n}-g_{n})+\aV{f_{n}-g_{n}}^{2}\leq \eps$ and $(g_{n})$ converges weakly to zero as well.
As $\lm_{1}<\lm_{0}^{\mathrm{ess}}(H)$, the spectral projection $E_{(-\infty,\lm_{1}]}$ of $H$ and the interval $(-\infty,\lm_{1}]$ is a finite rank operator.
Therefore, as $(g_{n})$ converges weakly to zero, there is an $N\geq0$ such that $\aV{E_{(-\infty,\lm_{1}]}g_{n}}^{2}< \eps$ for $n\geq N$.
Letting $\nu_{n}$ be the spectral measure of $H$ with respect to $g_{n}$, we estimate for $n\ge N$
\begin{align*}
h(g_{n})\geq\int_{\lm_{1}}^{\infty}td\nu_{n}(t)\geq
\lm_{1} \int_{\lm_{1}}^{\infty}d\nu_{n}(t)=\lm_{1}(\|g_{n}\|^{2}
-\|E_{(-\infty,\lm_{1}]}g_{n}\|^{2})> \lm_{1}(1-\eps),
\end{align*}
where we used $\lm_{1}\ge0$ as $h\geq0$.
Since $h(f_{n})\geq h(g_{n})-\eps$ by the choice of $g_{n}$, we conclude the asserted inequality by choosing
$\eps=(\lm_1-\lm)/(1+\lm_{1})>0$.
\end{proof}

\subsection{The test functions}
In this section we introduce the sequence of test functions which we will use to estimate the bottom of the (essential) spectrum.

If $\mu = \infty$ or $\ow{\mu} =\infty$ the statements of our theorem become obvious, therefore, from now on, we assume that $\mu, \ow{\mu} < \infty$.

For $r\in\N,x_{0}\in X,\al>0$, define
\begin{align*}
f_{r,x_{0},\al}&:X\to[0,\infty),\quad x\mapsto  \big( (e^{\al r}\wedge e^{\al (2r-\rho(x_{0},x))}) - 1 \big)\vee 0.
\end{align*}
Then, for fixed $r$, $\al$, $x_{0}$, we have $f\vert_{B_{r}}\equiv e^{\al r}-1$, $f\vert_{B_{2r}\setminus B_{r}}= e^{ \al(2r- \rho(x_{0},\cdot))}-1$ and $f\vert_{X\setminus B_{2r}}\equiv0$. Clearly, $f$ is spherically homogeneous, i.e., there exists $h:[0,\infty)\to[0,\infty)$ such that $f(x)=h(\rho(x_{0},x))$.
The definition of $f$ combines ideas from \cite{Br}, \cite{Fuj} and \cite{Stu}.

Moreover, for $r\in\N,x_{0}\in X,\al>0$, let $g_{r,x_{0},\al}:X\to[0,\infty)$, be given by
\begin{align*}
g_{r,x_{0},\al}=(f_{r,x_{0},\al}+2)1_{B_{2r}}
\end{align*}


\begin{lemma}\label{l:f} Let $\al>\mu/2$, $x_{0}\in X $ and $f_{r}=f_{r,x_{0},\al}$ and $g_{r}=g_{r,x_{0},\al}$ for $r\geq0$. Then,
\begin{itemize}
  \item [(a)]  $f_{r},g_{r}\in L^{2}(X,m)$ for all $r\ge0$.
  \item [(b)] If $m(\bigcup_{r}B_{r})=\infty$, then $f_{r}/\|f_{r}\|$ converges weakly to $0$ as $r\to\infty$.
   \item [(c)] There is a sequence $(r_{k})$ such that $\|g_{r_{k}}\|/\|f_{r_{k}}\|\to 1$  as $k\to\infty$.
\end{itemize}
If $\al>\ow\mu/2$, then
\begin{itemize}
   \item [(d)] There are a sequences $(x_{k})$ in $X$ and $(r_{k})$ such that $f_{k}=f_{r_{k},x_{k},\al},g_{k}=g_{r_{k},x_{k},\al}\in L^{2}(X,m)$ and we have that $\|g_{k}\|/\|f_{k}\|\to 1$  as $k\to\infty$.
\end{itemize}
\end{lemma}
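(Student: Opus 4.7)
I would treat the four parts in order, noting that (a) and (b) are routine while (c)--(d) contain the substantive content.

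For (a), both $f_r$ and $g_r$ are pointwise bounded by $e^{\al r}+1$ and supported in the distance ball $B_{2r}(x_0)$, which has finite measure under the intrinsic metric setup at hand; square-integrability is then immediate. For (b), by density of $C_c(X)$ in $L^2(X,m)$ and uniform normalization, weak convergence reduces to testing against $\varphi\in C_c(X)$. Continuity of $\rho$ together with $X=\bigcup_r B_r(x_0)$ places $\supp\varphi$ inside some $B_R(x_0)$, so $|\as{\varphi,f_r}|\leq\|\varphi\|_\infty(e^{\al r}-1)m(B_R(x_0))$, while $\|f_r\|\geq(e^{\al r}-1)\sqrt{m(B_r(x_0))}$. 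The ratio is therefore at most $\|\varphi\|_\infty m(B_R(x_0))/\sqrt{m(B_r(x_0))}\to 0$, using $m(B_r(x_0))\to\infty$ from the hypothesis.

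The heart of the lemma is (c). I would start from the identity
\begin{align*}
\|g_r\|^2-\|f_r\|^2 = 4\int_{B_{2r}} f_r\,dm + 4m(B_{2r}(x_0)),
\end{align*}
obtained from $g_r=f_r+2$ on $B_{2r}$ and $g_r=0$ off. The crucial step is to bound the integral by Cauchy--Schwarz rather than by its $L^\infty$-norm: $\int_{B_{2r}}f_r\,dm\leq\|f_r\|\sqrt{m(B_{2r}(x_0))}$. Combined with the trivial $\|g_r\|\geq\|f_r\|$, this gives
\begin{align*}
1\leq\frac{\|g_r\|}{\|f_r\|}\leq 1+2\frac{\sqrt{m(B_{2r}(x_0))}}{\|f_r\|},
\end{align*}
so it suffices to exhibit $r_k\to\infty$ with $m(B_{2r_k}(x_0))/\|f_{r_k}\|^2\to 0$. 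Using $\|f_r\|^2\geq(e^{\al r}-1)^2 m(B_r(x_0))$, this becomes $\log m(B_{2r_k}(x_0))-\log m(B_{r_k}(x_0))-2\al r_k\to-\infty$.

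For the subsequence I would pick $s_k\to\infty$ realizing the liminf, i.e.\ $(1/s_k)\log m(B_{s_k}(x_0))\to\mu$, and set $r_k=s_k/2$. The a priori lower bound $\log m(B_r(x_0))\geq(\mu-\eps)r$ for all large $r$ (built into the definition of $\liminf$) then yields
\begin{align*}
\log m(B_{2r_k}(x_0))-\log m(B_{r_k}(x_0))-2\al r_k\leq(\mu+\eps-2\al)r_k+o(r_k),
\end{align*}
which diverges to $-\infty$ once $\eps<2\al-\mu$, possible precisely because $\al>\mu/2$. Part (d) follows the same recipe with $\mu$ replaced by $\ow\mu$ and the base point allowed to vary: choose $s_k\to\infty$ and points $x_k\in X$ along which $(1/s_k)\log(m(B_{s_k}(x_k))/m(B_1(x_k)))\to\ow\mu$, then use the uniform lower bound $\log(m(B_r(x))/m(B_1(x)))\geq(\ow\mu-\eps)r$ valid for every $x$ and large $r$; the $\log m(B_1(x_k))$ terms cancel in the difference and the argument closes as in (c) using $\al>\ow\mu/2$. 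The main difficulty is precisely in the twofold refinement of the subsequence choice and the Cauchy--Schwarz step: the naive bound $\int f_r\,dm\leq(e^{\al r}-1)m(B_{2r})$ would demand $m(B_{2r_k})/m(B_{r_k})=o(e^{\al r_k})$, forcing $\al>\mu$, and would deliver only $\lm_0(L)\leq\mu^2$; the Cauchy--Schwarz refinement is what enables the sharp regime $\al>\mu/2$ and hence the bound $\mu^2/4$ in Theorem~\ref{t:main}.
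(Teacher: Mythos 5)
Your proposal is correct and, for the substantive parts (c) and (d), follows essentially the same route as the paper: the bound $\|g_r\|\leq\|f_r\|+2\sqrt{m(B_{2r})}$ (which the paper gets from the triangle inequality rather than your identity plus Cauchy--Schwarz, but it is the same estimate), the lower bound $\|f_r\|^2\geq(e^{\al r}-1)^2m(B_r)$, and the choice of radii $r_k$ so that the $\liminf$ is nearly attained at radius $2r_k$ while the a priori lower bound $\log m(B_r)\geq(\mu-\eps)r$ (respectively its uniform $\ow\mu$-analogue) is used at radius $r_k$ — exactly the two-sided use of the $\liminf$ that makes $\al>\mu/2$ suffice. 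The one caveat is in (b): you invoke $X=\bigcup_rB_r(x_0)$ to place $\supp\ph$ in a ball, but this is not among the hypotheses — $\rho$ is only a pseudo metric with values in $[0,\infty]$, so a compact set need not lie in any $B_R$; the paper avoids this by truncating an arbitrary $\psi\in L^2$ to $\psi 1_{\bigcup B_r}$ and then cutting off in $L^2$-norm rather than by support, and your argument should be patched the same way (the piece of $\psi$ outside $\bigcup_rB_r$ pairs to zero with $f_r$ anyway).
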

\begin{proof}
(a) As $\mu<\infty$ it follows that $m(B_{r}(x_{0}))<\infty$ for all $r\geq0$.  Therefore, $f_{r},g_{r}\in L^{2}(X,m)$ for all $r\ge0$ since $f_{r},g_{r}$ are supported in $B_{2r}$ and bounded.\\
(b) Let $\psi\in L^{2}(X,m)$ with $\aV{\psi}=1$, $\eps>0$ and set $\ph=\psi1_{\bigcup B_{r}}$. There exists $R>0$  such that $\aV{\ph 1_{X\setminus B_{R}}}\leq \eps/2$. Moreover, let $r\geq R$ be such that $m(B_{R})\leq\eps^{2} m(B_{r})/4$ (this choice is possible since $m(\bigcup B_{r})=\infty$).
We conclude by the Cauchy-Schwarz inequality  and $\|f_{r}1_{B_{R}}\|\leq\frac{\eps}{2}\|f_{r}\|$ that
\begin{align*}
\as{\ph,f_{r}} =\as{\ph 1_{B_{R}},f_{r}} + \as{\ph 1_{X\setminus B_{R}},f_{r}}\leq  \aV{\ph}\aV{f_{r}1_{B_{R}}}+\aV{\ph 1_{X\setminus B_{R}}} \aV{f_{r}}\leq \eps \aV{f_{r}}.
\end{align*}
As $\supp f_{r}\subseteq \bigcup_{s} B_{s}$,  it follows that $\as{\psi,f_{r}}=\as{\ph,f_{r}}$ for $r\geq0$ which proves (b).\\
Before we prove (c) we show (d) and indicate how to adapt the proof to (c) afterwards. 
Let $0<\eps<\al-\ow\mu/2$. By the definition of $\ow \mu$ there are sequences $(r_{k})$ of increasing positive numbers and $(x_{k})$ of elements in $X$ such that
\begin{align*}
    \frac{ m(B_{2r_{k}}(x_{k}))}{m(B_{1}(x_{k}))}&\leq e^{(2\ow \mu+\eps)r_{k}},\quad k\geq0.
\end{align*}
We set $f_{k}=f_{r_{k},x_{k},\al}$, $g_{k}=g_{r_{k},x_{k},\al}$. As $m(B_{2r_{k}}(x_{k}))<\infty$ and the functions $f_{k},g_{k}$ are supported in $B_{2r_{k}}(x_{k})$ and bounded, they are in $L^{2}(X,m)$. By definition we have
$g_{k}=g_{k}1_{B_{2r_{k}}}=(f_{k}+2)1_{B_{2r_{k}}}$, $k\geq0$. Using the inequalities $(a+b)^{2}\leq \frac{1}{(1-\eps)}a^{2}+\frac{1}{\eps}b^{2}$ and $\|f_{k}\|^{2}\geq  m({B_{r_{k}}}(x_{k}))(e^{\al r_{k}}-1)^{2}\geq m({B_{r_{k}}}(x_{k}))e^{2\al r_{k}}/c$ for some $c>0$   we get
\begin{align*}
\frac{\|g_{k}\|^{2}}{\|f_{k}\|^{2}}
&\leq\frac{(\|f_{k}\|+2\sqrt{m(B_{2r_{k}}(x_{k}))})^{2}}{\|f_{k}\|^{2}}
\leq\frac{\frac{1}{(1-\eps)}\|f_{k}\|^{2}+\frac{4}{\eps}{m({B_{2r_{k}}(x_{k})})}} {\|f_{k}\|^{2}}\\
&\leq\frac{1}{(1-\eps)}+\frac{4c}{\eps} {\frac{{m({B_{2r_{k}}(x_{k})})}}{m({B_{r_{k}}}(x_{k}))}} e^{-2\al r_{k}}.
\end{align*}
For $r_{k}$ large enough we have
\begin{align*}
\frac{ m(B_{r_{k}}(x_{k}))}{m(B_{1}(x_{k}))}\geq\inf_{x\in X}\frac{ m(B_{r_{k}}(x))}{m(B_{1}(x))}\geq e^{(\ow \mu-\eps)r_{k}}.
\end{align*}
Thus, by the choice of $(r_{k})$ and $(x_{k})$, we have ${\frac{{m({B_{2r_{k}}})}}{m({B_{r_{k}}})}} \leq e^{(\ow\mu+2\eps)r_{k}}$. As $0<\eps<\al-\ow\mu/2$
\begin{align*}
\frac{\|g_{{k}}\|^{2}}{\|f_{{k}}\|^{2}}\leq\frac{1}{(1-\eps)}+\frac{4c}{\eps}
e^{(\ow\mu+2\eps-2\al) r_{k}}\to \frac{1}{(1-\eps)}\quad\mbox{as $k\to\infty$}.
\end{align*}
Since $\eps$ can be chosen to be arbitrarily small and ${\|g_{{k}}\|}\geq{\|f_{{k}}\|}$ we deduce the statement.\\
For (c) we choose $(x_{k})$ to be $x_{0}$ and follow the lines of the proof replacing $\ow\mu$ by $\mu$.
\end{proof}

\begin{remark}If  $\inf_{x\in X}m(B_{1}(x))>0$, then $f_{k}/\|f_{k}\|$ of (d) also converges weakly to zero as $k\to\infty$.
\end{remark}

The following auxiliary estimates will later give us bounds for the Lipshitz constants of $f_{r,x,\al}$.

\begin{lemma}\label{l:e} Let $\al>0$. For all $R\geq 0$  one has
\begin{align*}
\frac{{\left({e}^{\alpha R} -1\right)}^{2}}{\left({e}^{2\alpha R} +1\right)}
\leq\frac{{\alpha }^{2}{R}^{2}}{2}.
\end{align*}
 Moreover, for $ R\in [0, 1] $ one has
\begin{align*}\frac{{\left({e}^{\alpha R} -1\right)}^{2}}{\left({e}^{2\alpha R} +1\right)}
\leq\frac{{R}^{2} {\left({e}^{\alpha}-1\right)}^{2}}{\left( R^2{e}^{2\alpha}+1\right)}.
\end{align*}
\end{lemma}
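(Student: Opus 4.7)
The first inequality rescales: setting $t=\alpha R\ge 0$, what has to be shown is
\[
2(e^{t}-1)^{2}\le t^{2}(e^{2t}+1).
\]
My plan is to split this into two elementary bounds. First, writing $e^{t}-1=\int_{0}^{t}e^{s}\,ds$ and applying Cauchy--Schwarz to the integrand pair $(1,e^{s})$ gives
\[
(e^{t}-1)^{2}\le t\int_{0}^{t}e^{2s}\,ds=\tfrac{t}{2}(e^{2t}-1).
\]
Second, the identity $e^{2t}-1=\tanh(t)(e^{2t}+1)$ combined with the standard inequality $\tanh(t)\le t$ for $t\ge0$ (immediate from $\tanh(0)=0$ and $\tanh'\le 1$) yields $e^{2t}-1\le t(e^{2t}+1)$. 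Multiplying the two bounds together produces the claim with no further work.

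For the second inequality, my plan is to clear denominators. After cross-multiplying by the (strictly positive) factor $(e^{2\alpha R}+1)(R^{2}e^{2\alpha}+1)$ and rearranging, the statement becomes
\[
(e^{\alpha R}-1)^{2}-R^{2}(e^{\alpha}-1)^{2}\le R^{2}\bigl[(e^{\alpha}-1)^{2}e^{2\alpha R}-(e^{\alpha R}-1)^{2}e^{2\alpha}\bigr].
\]
For the left-hand side I would invoke the convexity of $\rho\mapsto e^{\alpha\rho}$: since $R\in[0,1]$, the secant bound gives $e^{\alpha R}-1\le R(e^{\alpha}-1)$, whence the left-hand side is $\le 0$. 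For the right-hand side I would use a difference-of-squares factorisation of the bracket:
\[
(e^{\alpha}-1)^{2}e^{2\alpha R}-(e^{\alpha R}-1)^{2}e^{2\alpha}
=(e^{\alpha}-e^{\alpha R})\bigl(2e^{\alpha(R+1)}-e^{\alpha R}-e^{\alpha}\bigr),
\]
where the first factor is $\ge 0$ because $R\le 1$, and the second factor equals $e^{\alpha R}(e^{\alpha}-1)+e^{\alpha}(e^{\alpha R}-1)\ge 0$. Thus the right-hand side is $\ge 0$, and the chain $\text{LHS}\le 0\le\text{RHS}$ closes the argument.

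I do not anticipate any genuine obstacle; both parts come down to locating the right decomposition. For part~(a) the real content is recognising that Cauchy--Schwarz leaves a gap of exactly the size $\tanh(t)\le t$. For part~(b) the only step that requires any guessing is the factorisation of $(e^{\alpha}-1)^{2}e^{2\alpha R}-(e^{\alpha R}-1)^{2}e^{2\alpha}$ as a difference of squares; once this factorisation is written down, the sign analysis on $[0,1]$ is immediate from $R\le 1$ and $e^{\alpha R},e^{\alpha}\ge 1$.
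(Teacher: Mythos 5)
Your proof is correct, and for the first inequality it follows a genuinely different route from the paper. The paper simply instructs the reader to verify, via a power-series expansion, that $s\mapsto s^{2}(e^{2s}+1)-2(e^{s}-1)^{2}$ is non-negative; your argument instead decomposes the bound into two structured steps, the Cauchy--Schwarz estimate $(e^{t}-1)^{2}\le \tfrac{t}{2}(e^{2t}-1)$ and the elementary bound $e^{2t}-1=\tanh(t)(e^{2t}+1)\le t(e^{2t}+1)$, whose product gives the claim. This buys a conceptual explanation (and avoids checking signs of infinitely many Taylor coefficients), at the cost of invoking two auxiliary inequalities rather than one mechanical verification. For the second inequality both proofs hinge on the same key fact, the secant bound $e^{\alpha R}-1\le R(e^{\alpha}-1)$ for $R\in[0,1]$; the paper leaves the remaining ``direct calculation'' unspecified, while you make it explicit by cross-multiplying and splitting into the sign analysis $\mathrm{LHS}\le 0\le \mathrm{RHS}$, with the non-negativity of the right-hand side coming from the difference-of-squares factorisation $(e^{\alpha}-1)^{2}e^{2\alpha R}-(e^{\alpha R}-1)^{2}e^{2\alpha}=(e^{\alpha}-e^{\alpha R})\bigl(2e^{\alpha(R+1)}-e^{\alpha R}-e^{\alpha}\bigr)$, which I have checked is an identity and whose factors are indeed both non-negative for $R\in[0,1]$ and $\al>0$. (An equivalent, slightly shorter direct calculation compares the two products term by term, using $(e^{\alpha R}-1)^{2}e^{2\alpha}\le(e^{\alpha}-1)^{2}e^{2\alpha R}$, i.e.\ $1-e^{-\alpha R}\le 1-e^{-\alpha}$, together with $(e^{\alpha R}-1)^{2}\le R^{2}(e^{\alpha}-1)^{2}$; this is presumably what the authors had in mind.) There are no gaps in your argument.
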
\begin{proof}
For the first statement let $s=\al R$ and check via a series expansion that $s\mapsto{s}^{2}\left({e}^{2s} +1\right)- 2{\left({e}^{s}-1\right)}^{2}$ is non-negative.
The second statement follows by direct calculation since we have $e^{\al R}-1\leq R(e^{\al}-1)$ for $R\in[0,1]$ and $\al > 0$.
\end{proof}

\begin{lemma}\label{l:f_Lip} Let $r\in\N$, $x_{0}\in X$, $\al>0$ and set $f:=f_{r,x_{0},\al}$, $g:=g_{r,x_{0},\al}$.
Then, for all $ x,y\in X$
\begin{align*}
(f(x)-f(y))^{2}&\leq c(\al) (g(x)^{2}+g(y)^2) \rho(x,y)^{2}
\end{align*}
where $c(\al)= \frac{{\al^{2}}}{2} $.
If additionally  $\rho(x,y)\leq 1$, then $c(\al)$ can be chosen to be $c(\al,\rho(x,y))=\frac{(e^{\al}-1)^{2}}{\rho(x,y)^{2}e^{2\al}+1}$.
In particular, $f$ is Lipshitz continuous with Lipshitz constant $\al( e^{\al r}+1)$.
\end{lemma}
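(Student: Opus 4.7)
The plan is to exploit the fact that $f(x)=h(\rho(x_{0},x))$ is spherically homogeneous, with $h:[0,\infty)\to[0,\infty)$ piecewise given by $h(s)=e^{\al r}-1$ on $[0,r]$, $h(s)=e^{\al(2r-s)}-1$ on $[r,2r]$, and $h(s)=0$ on $[2r,\infty)$. Setting $s=\rho(x_{0},x)$ and $t=\rho(x_{0},y)$ and assuming $s\leq t$ without loss of generality, the triangle inequality for the pseudo metric $\rho$ gives $|s-t|\leq\rho(x,y)$, so the problem reduces to a one-dimensional estimate on the piecewise-exponential function $h$. A case analysis on the location of $(s,t)$ relative to the breakpoints $r$ and $2r$ then suffices; the two cases in which $s,t$ lie in the same flat piece ($[0,r]$ or $[2r,\infty)$) are trivial since $f(x)=f(y)$, leaving four non-trivial cases.

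In each non-trivial case the key manipulation is to write $(e^{a}-e^{b})^{2}=e^{2\min(a,b)}(e^{|a-b|}-1)^{2}$, where $a,b$ are the relevant exponents coming from $h$, and then apply the first estimate of Lemma~\ref{l:e} with an effective width $R$ equal to $t-s$, $t-r$, $r$, or $2r-s$ according to the case. This yields an upper bound of the form $(\al^{2}R^{2}/2)(e^{2a}+e^{2b})$. One verifies in each case that $R\leq\rho(x,y)$: when $t>2r$ this uses $t-s>r$ (respectively $t-s>2r-s$) together with $|s-t|\leq\rho(x,y)$. One also verifies that $e^{2a}\leq g(x)^{2}$ and $e^{2b}\leq g(y)^{2}$; this is precisely where the $+2$ in the definition of $g$ is used, since on $B_{2r}$ one has $g=e^{(\cdot)}+1\geq e^{(\cdot)}$, while outside $B_{2r}$ both $f$ and $g$ vanish. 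Summing the two contributions delivers the first displayed inequality with $c(\al)=\al^{2}/2$.

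For the refined bound in the regime $\rho(x,y)\leq 1$, I would rerun the same case analysis but substitute the second part of Lemma~\ref{l:e} in place of the first; the required hypothesis $R\leq 1$ is inherited from $R\leq\rho(x,y)\leq 1$. To pass from the effective width $R$ to $\rho(x,y)$ inside the resulting coefficient, I would verify the elementary monotonicity of the map $u\mapsto u/(ue^{2\al}+1)$ in $u=R^{2}\geq 0$ by computing its derivative. The Lipshitz bound then follows by feeding the crude pointwise estimate $g\leq e^{\al r}+1$ into the already-proved first inequality and taking square roots. The main potential pitfall in this program is the bookkeeping across the four non-trivial cases, specifically the correct identification of $e^{2a}$ and $e^{2b}$ with the square of the appropriate $g$-value, since an off-by-one in the piecewise structure of $h$ or $g$ would derail the estimate.
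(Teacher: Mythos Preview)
Your proposal is correct and follows essentially the same route as the paper: reduce to a one-dimensional estimate via $s=\rho(x_{0},x)$, $t=\rho(x_{0},y)$ and the triangle inequality, then run the six-case analysis (two trivial, four non-trivial) using Lemma~\ref{l:e} with the effective widths $t-r$, $r$, $t-s$, $2r-s$ exactly as you list. Your factorization $(e^{a}-e^{b})^{2}=e^{2\min(a,b)}(e^{|a-b|}-1)^{2}$ followed by Lemma~\ref{l:e} is algebraically equivalent to the paper's direct identity $(e^{a}-e^{b})^{2}=(e^{2a}+e^{2b})\frac{(e^{|a-b|}-1)^{2}}{e^{2|a-b|}+1}$, and your explicit remark on the monotonicity of $u\mapsto u/(ue^{2\al}+1)$ for the refined constant $c(\al,\rho(x,y))$ makes precise a step the paper leaves to the reader.
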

\begin{proof} We fix $r$, $\al$ and $x_{0}$ for the proof.
Let $x,y\in X$ be given and let $s=\rho(x_{0},x)$ and $t=\rho(x_{0},y)$.
We define $    D_{s,t}:=(f(x)- f(y))^{2}$.
Moreover, we use the estimate on $F(R):=\frac{(e^{\al R}-1)^{2}}{e^{2\al R}+1}$, $R\geq0,$  by $c(\al)R^{2}$ (and by $c(\al,R)R^{2}$ for $R\leq1$) from Lemma~\ref{l:e}.
By symmetry we  may assume, without loss of generality, that $s\leq t$ so that we have six cases to check.\\
Case 1: If $s\leq t\leq r$, then $D_{s,t}=0$.\\
Case 2: If $s\leq r\leq t\leq 2r$, then since $t-r\leq t-s=\rho(x_{0},y)-\rho(x_{0},x)\leq \rho(x,y)$ and $g(x)=e^{\al r}+1$, $g(y)=e^{\al (2r-t)}+1$,
\begin{align*}
D_{s,t}&=(e^{\al r} -e^{\al (2r-t)})^{2} =( e^{2\al r} +e^{2\al (2r-t)})F(t-r)
\leq ( e^{2\al r} +e^{2\al (2r-t)})c(\al)(t-r)^{2}\\
&\leq  c(\al)(g(x)^{2}+ g(y)^{2})\rho(x,y)^{2}.
\end{align*}
Case 3: If $s\leq r\leq 2r\leq t$, then since $r\leq t-s\leq \rho(x,y)$, $g(x)=e^{\al r}+1$ and $g({y})=0$,
\begin{align*}
D_{s,t}=(e^{\al r} -1)^{2} = (e^{2\al r}+1)F(r) \leq (e^{2\al r}+1)c(\al) r^{2}\leq2c(\al) (g(x)^{2}+g(y)^{2})\rho(x,y)^{2}.
\end{align*}
Case 4:  If $ r\leq s\leq t\leq 2r$, then since $t-s\leq \rho(x,y)$ and  $g(x)=e^{\al(2r- s)}+1$, $g(y)=e^{\al (2r-t)}+1$,
\begin{align*}
D_{s,t}&=(e^{\al(2r-s)} -e^{\al(2r-t)})^{2}= ( e^{2\al(2r- s)}+ e^{2\al(2r -t)}) F(t-s)\\
&\leq  c(\al)(g(x)^{2}+ g(y)^{2})\rho(x,y)^{2}.
\end{align*}
Case 5:  If $ r\leq s\leq 2r\leq t$, then since $2r-s\leq t-s\leq \rho(x,y)$, $g(x)=e^{\al(2r- s)}+1$ and $g(y)=0$,
\begin{align*}
D_{s,t}=(e^{\al(2r-s)}-1)^{2} =
( e^{2\al (2r-s)}+1)F(2r-s)\leq c(\al)(g(x)^{2}+g(y)^{2})\rho(x,y)^{2}.
\end{align*}
Case 6: If $2r\leq s\leq t$, then $D_{s,t}=0$.\\
The Lipshitz bound follows since $g$ is bounded by $e^{\al r}+1$.
\end{proof}

\begin{lemma}\label{l:f_in_D} Let $(\E,D)$ be a regular Dirichlet form and $\rho$ an intrinsic metric. For all  $r>0$, $x_{0}\in X$ and $\al>0$ we have $f:=f_{r,x_{0},\al}\in D_{\mathrm{loc}}^{*}$. Moreover, if $B_{2r}(x_{0})$ is compact, then $f\in D$.
\end{lemma}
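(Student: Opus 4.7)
The plan is to realise $f$ as a composition $f=\Phi(\psi)$, where $\psi:=\rho_{\{x_0\}}\wedge 2r$ belongs to $D_{\mathrm{loc}}^*\cap C(X)$ by the intrinsic property of $\rho$, and $\Phi:\R\to\R$ is the explicit bounded Lipschitz function equal to $e^{\al r}-1$ on $(-\infty,r]$, to $e^{\al(2r-t)}-1$ on $[r,2r]$ and to $0$ on $[2r,\infty)$, with Lipschitz constant $L=\al e^{\al r}$. A direct case check using $\psi=\rho(x_0,\cdot)$ on $B_{2r}$ and $\psi=2r$ off $B_{2r}$ confirms $f=\Phi(\psi)$ on $X$.

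The jump-integrability condition for $D_{\mathrm{loc}}^*$ is then immediate: the Lipschitz bound $(\Phi(a)-\Phi(b))^2\leq L^2(a-b)^2$ combined with the intrinsic property $\Gm^{(j)}(\psi)\leq m^{(j)}$ yields, for every compact $K\subseteq X$,
\[
\int_{K\times X\setminus d}(f(x)-f(y))^2\,dJ(x,y)\leq L^2\,\Gm^{(j)}(\psi)(K)\leq L^2 m^{(j)}(K)<\infty,
\]
and $f\in L^{2}_{\mathrm{loc}}(X,m)$ is clear from boundedness. For the local-agreement half, given an open relatively compact $G\subseteq X$, I would choose an open relatively compact $G'$ with $\bar G\subseteq G'$, use $\psi\in D_{\mathrm{loc}}^*$ to pick $\tilde v\in D$ with $\tilde v=\psi$ on $G'$, and use regularity of $\E$ to pick a cut-off $\chi\in D\cap C_c(X)$ with $0\leq\chi\leq 1$, $\chi\equiv 1$ on $\bar G$ and $\supp\chi\subseteq G'$. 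The candidate is $v:=\chi\,\Phi(\tilde v)$, which equals $f$ on $G$ because $\chi=1$ and $\tilde v=\psi$ there. To see $v\in D$, observe that $N:=(\Phi-\Phi(0))/L$ is a normal contraction, so the Dirichlet property gives $\Phi(\tilde v)-\Phi(0)=L\,N(\tilde v)\in D$; this function is bounded and, being $0$ where $\tilde v\leq r$, is supported on the finite-measure set $\{\tilde v>r\}$ (finite since $\tilde v\in L^2$). Using the algebra property of $D\cap L^\infty$ one then obtains
\[
v=\chi\bigl(\Phi(\tilde v)-\Phi(0)\bigr)+(e^{\al r}-1)\chi\in D,
\]
which establishes $f\in D_{\mathrm{loc}}^*$.

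For the stronger statement, when $B_{2r}(x_0)$ is compact I would choose the $G$ above so that $\bar B_{2r}\subseteq G$; then $v$ in fact coincides with $f$ on all of $X$, since on $G\setminus B_{2r}$ one has $\psi=2r$ so $\Phi(\tilde v)=\Phi(2r)=0$, while off $G$ the cut-off $\chi$ vanishes. Hence $f=v\in D$. The main technical point to anticipate is that $\Phi(0)=e^{\al r}-1\neq 0$, so $\Phi$ itself is not a normal contraction and the Dirichlet property cannot be applied to $\Phi(\tilde v)$ directly; this is circumvented by subtracting $\Phi(0)$, applying the contraction to $\tilde v$, and then restoring the constant through multiplication by $\chi\in D\cap L^\infty$.
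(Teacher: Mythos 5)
Your proof is correct, but it takes a genuinely different route from the paper. The paper simply observes that $f$ is Lipschitz with respect to $\rho$ (Lemma~\ref{l:f_Lip}) and then invokes a Rademacher-type theorem (\cite[Theorem~5.1]{Sto}, \cite[Theorem~4.8]{FLW}) to conclude $f\in D_{\mathrm{loc}}^{*}$ with $\Gm(f)\leq m$, after which compact support gives $f\in D$. You instead exploit the specific structure of $f$ as a spherically symmetric function: writing $f=\Phi(\rho_{\{x_0\}}\wedge 2r)$ and building the local comparison function by hand from the normal-contraction property (applied to $(\Phi-\Phi(0))/L$), the cut-off supplied by regularity, and the algebra property of $D\cap L^{\infty}$. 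What this buys is a self-contained, elementary argument that avoids the Rademacher black box; what it costs is generality (it would not handle an arbitrary $\rho$-Lipschitz function) and the extra conclusion $\Gm(f)\leq m$ that the cited theorem delivers (though that conclusion is not needed for the lemma as stated). The verification of the jump-integrability condition via $\Gm^{(j)}(\rho_{\{x_0\}}\wedge 2r)\leq m^{(j)}$ and the Lipschitz bound on $\Phi$ is sound.

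One small imprecision at the end: you claim $v=f$ off $G$ because ``the cut-off $\chi$ vanishes,'' but $\chi$ is only guaranteed to vanish off $G'$; on $G'\setminus G$ it may take values in $(0,1)$. The argument still closes because $G'\setminus G\subseteq G'\setminus B_{2r}$, where $\tilde v=\psi=2r$ and hence $\Phi(\tilde v)=0$, so $v=0=f$ there as well, and off $G'$ one indeed has $\chi=0$. This is a one-line repair, not a gap.
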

\begin{proof}
By Lemma~\ref{l:f_Lip} the functions $f:=f_{r,x_{0},\al}$ are Lipshitz continuous for all $r>0$, $x_{0}$ and $\al>0$. Thus, by a Rademacher type theorem, see e.g. \cite[Theorem~5.1]{Sto} for strongly local forms or \cite[Theorem~4.8]{FLW} for general Dirichlet forms, we have $f\in D_{\mathrm{loc}}^{*}$ and $\Gm(f)\leq m$. If $B_{2r}(x_{0})$ is compact, then the function $f$ is compactly supported which implies that $f\in D$.
\end{proof}

\section{Proof of the main theorem}\label{s:proof}
\subsection{The strongly local estimate}
In this subsection we give an estimate which will be used to prove the theorem for the strongly local part of the Dirichlet form. For given $r\in\N$, $x_{0}\in X$  and $\al>0$ we denote $f:=f_{r,x_{0},\al}$ and $g:=g_{r,x_{0},\al}$.

\begin{lemma}\label{l:SL} Let $\rho$ be an intrinsic metric for a regular strongly local Dirichlet form $\mathcal{E}$. Then, for all $r>0$, $x_{0}\in X$ and $\al>0$ such that $f\in D$ we have
\begin{align*}
\E(f)\leq\al^{2}\int_{X}g^{2}dm^{(c)}.
\end{align*}
\end{lemma}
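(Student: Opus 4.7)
I would write $f$ as the composition $f=h(u)$, where $u:=\rho_{x_{0}}\wedge 2r$ is the truncated distance from $x_{0}$ and $h\colon [0,\infty)\to[0,\infty)$ is the explicit Lipschitz profile of $f$ given by $h(s)=e^{\al r}-1$ on $[0,r]$, $h(s)=e^{\al(2r-s)}-1$ on $[r,2r]$, and $h\equiv 0$ on $[2r,\infty)$. The plan is then to transfer the energy estimate for $f$ to the known bound for the truncated intrinsic distance via the chain rule for the strongly local part of $\E$.

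The intrinsic metric hypothesis (applied with $A=\{x_{0}\}$ and $T=2r$) furnishes $u\in D^{*}_{\mathrm{loc}}\cap C(X)$ together with $\Gm^{(c)}(u)\leq m^{(c)}$. A direct computation gives the a.e.\ derivative
\begin{align*}
|h'(s)|=\begin{cases}0,& s\in[0,r)\cup(2r,\infty),\\ \al\,e^{\al(2r-s)},& s\in(r,2r),\end{cases}
\end{align*}
and inspecting the three regions yields the pointwise bound $|h'(u(x))|\leq \al\,g(x)$ on $X$ (on $B_{r}$ the left side is $0$ while $g\geq e^{\al r}+1$; on $B_{2r}\setminus B_{r}$ one has $\al e^{\al(2r-\rho)}\leq \al(e^{\al(2r-\rho)}+1)$; outside $B_{2r}$ both sides vanish).

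If the chain rule recalled in Section~\ref{s:DF} applied directly, I would immediately obtain $\Gm^{(c)}(f)=(h'(u))^{2}\Gm^{(c)}(u)\leq \al^{2}g^{2}\,m^{(c)}$, and integrating would finish the proof. The main technical obstacle is that $h$ fails to be $C^{1}$ at $s=r$ and $s=2r$, whereas the stated chain rule is formulated for $C^{1}$ maps with bounded derivative. I would circumvent this by approximating $h$ uniformly by $C^{1}$ functions $h_{n}$ whose derivatives satisfy $|h_{n}'(s)|\leq \al\,\tilde g(s)$, where $\tilde g$ is the spherically symmetric profile of $g$ (a standard mollification of $h$ with small-support kernel suffices after a trivial endpoint adjustment). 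Applying the $C^{1}$ chain rule to $h_{n}(u)$ gives
\begin{align*}
\Gm^{(c)}(h_{n}(u))=(h_{n}'(u))^{2}\,\Gm^{(c)}(u)\leq \al^{2}g^{2}\,\Gm^{(c)}(u)\leq \al^{2}g^{2}\,m^{(c)}.
\end{align*}
Since $f\in D$ by Lemma~\ref{l:f_in_D} (using compactness of $B_{2r}$) and $h_{n}(u)\to f$ uniformly with supports contained in the fixed compact set $B_{2r}$, the form--norm convergence $h_{n}(u)\to f$ follows, and passing to the limit yields
\begin{align*}
\E(f)=\int_{X}d\Gm^{(c)}(f)\leq \al^{2}\int_{X}g^{2}\,dm^{(c)},
\end{align*}
as claimed.
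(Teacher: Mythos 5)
Your proof rests on the same two ingredients as the paper's --- the chain rule for $\Gm^{(c)}$ together with the intrinsic-metric bound $\Gm^{(c)}(\rho(x_{0},\cdot)\wedge T)\leq m^{(c)}$, followed by the pointwise estimate $\al^{2}e^{2\al(2r-\rho(x_{0},\cdot))}\leq \al^{2}g^{2}$ --- but you route around the non-smoothness of the profile $h$ differently. The paper first uses strong locality to discard everything outside the annulus $B_{2r}\setminus B_{r}$; on that annulus $f$ coincides with the globally smooth function $e^{\al(2r-\rho(x_{0},\cdot))}-1$, so the $C^{1}$ chain rule applies directly and no mollification or limiting procedure is needed. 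Your mollification route can be made to work, but the two steps you wave at are exactly where the effort sits.

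First, the endpoint $s=2r$: with $u=\rho(x_{0},\cdot)\wedge 2r$ the entire exterior $X\setminus B_{2r}$ lies on the level set $\{u=2r\}$, where $g=0$, yet a generic (e.g.\ symmetric) mollification leaves $h_{n}'(2r)\neq 0$ and even $h_{n}(2r)\neq 0$, the latter destroying compact support of $h_{n}(u)$. You must either smooth one-sidedly so that $h_{n}\equiv 0$ on $[2r-1/n,\infty)$ (accepting a multiplicative constant that tends to $1$), or invoke the fact that energy measures of strongly local forms do not charge level sets of $u$; ``a trivial endpoint adjustment'' does not by itself secure the bound $|h_{n}'(u)|\leq\al g$ where $g$ vanishes. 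Second, your assertion that uniform convergence $h_{n}(u)\to f$ with supports in the fixed compact set $B_{2r}$ yields form-norm convergence is false as stated: uniform convergence controls only the $L^{2}$ part of $\|\cdot\|_{\E}$, not the energy. The correct way to pass to the limit is lower semicontinuity of the closed form $\E$ under $L^{2}$-convergence, which gives $\E(f)\leq\liminf_{n}\E(h_{n}(u))\leq\al^{2}\int_{X}g^{2}\,dm^{(c)}$ and is all you need (after checking $h_{n}(u)\in D$, e.g.\ as in Lemma~\ref{l:f_in_D}). With these two repairs your argument is complete; the paper's localize-first argument simply avoids both issues.
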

\begin{proof}
As $\E$ is strongly local, we get by the chain rule and the fact that $\rho $ is an intrinsic metric that
\begin{align*}
\E(f)&=\int_{B_{2r}\setminus B_{r}}d\Gm^{(c)}(f)=
\int_{B_{2r}\setminus B_{r}}d\Gm^{(c)}(e^{\al(2r-\rho(x_{0},\cdot))} -1)\\
&=\al^{2}\int_{B_{2r}\setminus B_{r}} e^{2\al(2r-\rho(x_{0},\cdot))}d\Gm^{(c)}(\rho(x_{0},\cdot))\\
&\leq\al^{2}\int_{B_{2r}\setminus B_{r}} e^{2\al(2r-\rho(x_{0},\cdot))}dm^{(c)}\leq \al^{2}\int_{X}g_{r,x_{0},\al}^{2}dm^{(c)}.
\end{align*}
\end{proof}

\subsection{The non-local estimate}\label{s:nonlocal}
Next, we treat the non-local case.  With applications to graphs in the next section in mind, we do not assume that the jump part is a regular Dirichlet form for now.

For this subsection, let $m$ be a Radon measure on $X$ and let $J$ be a symmetric Radon measure on $X\times X\setminus d$ such that for every $m$-measurable $A\subseteq X$ the set $A\times X\setminus d$ is $J$ measurable and vice versa. Let $\rho$ be a pseudo metric on $X$ which is $J$ measurable and assume that for all measurable $A\subseteq X$
\begin{align}\label{e:adapted}\tag{$\clubsuit$}
\int_{A\times X\setminus d}\rho(x,y)^{2}dJ(x,y)\leq m(A)
\end{align}
which immediately implies that for all  measurable functions $\ph$
\begin{align*}
\int_{X\times X\setminus d}\ph(x)^{2}\rho(x,y)^{2}dJ(x,y)\leq \int_{X} \ph^{2} dm.
\end{align*}
We say that the pseudo metric $\rho$ has \emph{jump size in} $[a,b]$, $0\leq a\leq b$, if for the set $A_{a,b}:=\{(x,y)\in X \times X \mid\rho(x,y)\in[a,b]\}\setminus d$
\begin{align*}
\int_{X\times X\setminus d}\rho(x,y)^{2}dJ(x,y)=\int_{A_{a,b}} \rho(x,y)^{2}dJ(x,y).
\end{align*}
For given $r\in\N$, $x_{0}\in X$  and $\al>0$ we denote $f:=f_{r,x_{0},\al}$ and $g:=g_{r,x_{0},\al}$.
\medskip

\begin{lemma}\label{l:NL}Assume that $\rho$ satisfies \eqref{e:adapted}.
For all $r\in\N$, $x_{0}\in X$  and $\al>0$
\begin{align*}
\int_{X\times X\setminus d}(f(x)-f(y))^{2}dJ(x,y)\leq\ 2c({\al }) \int_{X} g^2dm,
\end{align*}
where $c(\al)= \frac{\al^{2}}{2}$.
If $\rho$ has jump size in $[\de,1]$ for some $0\leq\de\leq1$, then $c(\al)$ can be chosen to be $c(\al,\de)=\frac{(e^{\al}-1)^{2}}{1+\de^{2}e^{2\al}}$.
\end{lemma}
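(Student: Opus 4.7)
The proof is essentially a combination of the pointwise estimate in Lemma~\ref{l:f_Lip} with the adapted condition \eqref{e:adapted}, so the plan is short. First, I would invoke Lemma~\ref{l:f_Lip} inside the integrand to get the pointwise bound
\begin{align*}
(f(x)-f(y))^{2} \leq c(\al)\bigl(g(x)^{2}+g(y)^{2}\bigr)\rho(x,y)^{2}
\end{align*}
for all $x,y\in X$, with $c(\al)=\al^{2}/2$.

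Next, I would integrate against $dJ(x,y)$ over $X\times X\setminus d$. The resulting right-hand side splits into two pieces $\int g(x)^{2}\rho(x,y)^{2}\,dJ$ and $\int g(y)^{2}\rho(x,y)^{2}\,dJ$, which by the symmetry of $J$ are equal. Thus the total is bounded by
\begin{align*}
2c(\al)\int_{X\times X\setminus d} g(x)^{2}\rho(x,y)^{2}\,dJ(x,y).
\end{align*}
Now I would apply the adapted condition \eqref{e:adapted}, in its functional form (i.e.\ to the measurable function $g^{2}$), to conclude that this is at most $2c(\al)\int_{X}g^{2}\,dm$, which gives the first assertion.

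For the sharpened constant under the jump-size restriction $\rho\in[\de,1]$, I would note that the integral is by hypothesis supported on the set $A_{\de,1}$. On this set $\rho(x,y)\leq 1$, so the second part of Lemma~\ref{l:f_Lip} applies and yields the pointwise bound with $c(\al,\rho(x,y))=(e^{\al}-1)^{2}/(\rho(x,y)^{2}e^{2\al}+1)$. Since this function is monotonically decreasing in $\rho(x,y)$ and $\rho(x,y)\geq \de$ on $A_{\de,1}$, we may replace it by $c(\al,\de)=(e^{\al}-1)^{2}/(1+\de^{2}e^{2\al})$ and then repeat the symmetrization and \eqref{e:adapted} step verbatim.

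I do not anticipate a real obstacle: the hard work was done in Lemma~\ref{l:f_Lip} (producing a pointwise bound whose dependence on $\rho(x,y)$ is exactly quadratic, which is the shape consumed by \eqref{e:adapted}) and in Lemma~\ref{l:e} (which is what allows the constant in front to have the two forms $\al^{2}/2$ and $(e^{\al}-1)^{2}/(1+\de^{2}e^{2\al})$). The only point that needs a small check is that the monotonicity argument for the refined constant genuinely works on all of $A_{\de,1}$, but this is immediate from the explicit formula.
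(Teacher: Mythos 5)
Your proposal is correct and follows essentially the same route as the paper: integrate the pointwise bound of Lemma~\ref{l:f_Lip} against $J$, use the symmetry of $J$ to turn $g(x)^{2}+g(y)^{2}$ into $2g(x)^{2}$, and then apply the functional form of \eqref{e:adapted} with $\ph=g$; the refined constant is obtained exactly as you describe, by restricting to $A_{\de,1}$ and using monotonicity of $c(\al,R)$ in $R\geq\de$. No gaps.
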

\begin{proof}
By Lemma~\ref{l:f_Lip} and since $\rho$ satisfies \eqref{e:adapted}
\begin{align*}
\int_{X\times X\setminus d} (f(x)-f(y))^{2}dJ(x,y)\leq
{\al^{2}}\int_{X\times X\setminus d} g(x)^{2}\rho(x,y)^{2}dJ(x,y)\leq{\al^{2}} \int_{X} g^2dm.
\end{align*}
Let $\de>0$.
If the jump size is in $[\de,1]$, then
\begin{align*}
\int_{X\times X\setminus d} &(f(x)-f(y))^{2}dJ(x,y)= \int_{{A_{\de,1}}} (f(x)-f(y))^{2} dJ(x,y)\\
\leq&
\int_{{A_{\de,1}}} |g(x)|^{2}\frac{2(e^{\al}-1)^{2}}{(1+\rho(x,y)^{2}e^{2\al})} \rho(x,y)^{2}dJ(x,y) \\ \leq& \frac{2(e^{\al}-1)^{2}}{(1+\de^{2}e^{2\al})} \int_{X\times X\setminus d} g(x)^{2}\rho(x,y)^{2}dJ(x,y)\\
\leq& \frac{2(e^{\al}-1)^{2}}{(1+\de^{2}e^{2\al})} \int_{X}g^{2}dm.
\end{align*}
\end{proof}

\subsection{Proof of Theorem~\ref{t:main}}
We  now have all of the ingredients to prove our main result.\medskip

\begin{proof}[Proof of Theorem~\ref{t:main}] By \cite[Lemma~4.7]{FLW} an intrinsic metric satisfies \eqref{e:adapted}. Moreover, under the assumption that the distance balls are compact we have that $f_{r,x,\al}\in D$ for all $r>0$, $x\in X$, $\al>0$ by Lemma~\ref{l:f_in_D}.

By Lemma~\ref{l:f}~(d) there are a sequences $(x_{k})$ and $r_{k}$ such that for $f_{k}=f_{r_{k},x_{k},\al}$, $g_{k}=g_{r_{k},x_{k},\al}$ with $\al>\ow\mu/2$
\begin{align*}
\lm_{0}(L)\leq  \lim_{k\to\infty}\frac{\E(f_{k})}{\|f_{k}\|^{2}}\leq \al^{2}\lim_{k\to\infty}\frac{\|g_{k}\|^{2}}{\|f_{k}\|^{2}}=\al^{2}, \end{align*}
where  the second inequality follows from Lemmas~\ref{l:SL} and~\ref{l:NL} and the equality follows from Lemma~\ref{l:f}~(d). Hence, $\lm_{0}(L)\leq \ow \mu^{2}/4$. Let now $(r_{k})$ be the sequence given by Lemma~\ref{l:f}~(c)  for some fixed $x_{0}\in X$ and let $x_{k}=x_{0}$ for all $k\geq0$. By Lemma~\ref{l:f}~(b) the sequence $(f_{k}/\|f_{k}\|)$ converges weakly to zero and, therefore, we get by Proposition~\ref{p:h} and Lemma~\ref{l:f}~(c), that
\begin{align*} \lm_{0}^{\mathrm{ess}}(L) \leq\lim_{k\to\infty}\frac{\E(f_{k})}{\|f_{k}\|^{2}} \leq \al^{2}\lim_{k\to\infty}\frac{\|g_{k}\|^{2}}{\|f_{k}\|^{2}}=\al^{2}. \end{align*}
Therefore, $\lm_{0}^{\mathrm{ess}}(L) \leq\mu^{2}/4$.
\end{proof}

\subsection{A more general non-local estimate}
Let $L$ be the positive selfadjoint operator associated to $\E$.\medskip

\begin{thm}\label{t:jump} Assume that $\rho$ satisfies \eqref{e:adapted} and  $f_{r,x,\al}\in D$ for all $r\geq0$, $x\in X$ and $\al>\ow\mu/2$. Then,
\begin{align*}    \lm_{0}(L)\leq\frac{\ow\mu^{2}}{4}\quad\mbox{and}\quad \lm_{0}^{\mathrm{ess}}(L)\leq \frac{\mu^{2}}{4}
\end{align*}
if $m(\bigcup B_{r}(x_{0}))=\infty$ for $x_{0}$ used to define $\mu$.\\
If the jump size is bounded in $[\de,1]$ for some $0\leq\de\leq1$, then
\begin{align*}    \lm_{0}(L)\leq\frac{2{(e^{\ow\mu/2}-1)^{2}}}{{\de^{2}e^{\ow\mu}}+1} \quad\mbox{and}\quad \lm_{0}^{\mathrm{ess}}(L)\leq \frac{2{(e^{\mu/2}-1)^{2}}}{{\de^{2}e^{\mu}+1}}
\end{align*}
if $m(\bigcup B_{r}(x_{0}))=\infty$ for $x_{0}$ used to define $\mu$.
\end{thm}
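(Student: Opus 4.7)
The plan is to adapt the proof of Theorem~\ref{t:main} to this more general setting, noting that the two hypotheses here replace exactly what compactness of balls gave us before: the condition \eqref{e:adapted} replaces the intrinsic metric property (via \cite[Lemma~4.7]{FLW}, as in the original argument), and the explicit assumption $f_{r,x,\al}\in D$ replaces Lemma~\ref{l:f_in_D}. Since we are in the non-local setting, we no longer need Lemma~\ref{l:SL}; Lemma~\ref{l:NL} alone supplies the form estimate.

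For the bound on $\lm_0(L)$, fix $\al>\ow\mu/2$. By Lemma~\ref{l:f}(d) there are sequences $(x_k)$ in $X$ and $(r_k)$ with $\|g_k\|/\|f_k\|\to 1$, where $f_k=f_{r_k,x_k,\al}$ and $g_k=g_{r_k,x_k,\al}$. Since $f_k\in D$ by hypothesis, the variational principle gives $\lm_0(L)\leq \E(f_k)/\|f_k\|^2$, and Lemma~\ref{l:NL} yields
\begin{align*}
\lm_0(L)\leq 2c(\al)\lim_{k\to\infty}\frac{\|g_k\|^2}{\|f_k\|^2}=2c(\al).
\end{align*}
In the general case, $2c(\al)=\al^2$, and letting $\al\searrow \ow\mu/2$ gives $\lm_0(L)\leq \ow\mu^2/4$. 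In the case of jump size in $[\de,1]$, Lemma~\ref{l:NL} permits $2c(\al,\de)=2(e^\al-1)^2/(1+\de^2 e^{2\al})$, and the same limit $\al\searrow \ow\mu/2$ yields the sharper bound $2(e^{\ow\mu/2}-1)^2/(\de^2 e^{\ow\mu}+1)$ since both expressions are continuous in $\al$.

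For the bound on $\lm_0^{\mathrm{ess}}(L)$, fix $x_0\in X$ with $m(\bigcup_r B_r(x_0))=\infty$ and fix $\al>\mu/2$. Using Lemma~\ref{l:f}(c) choose $(r_k)$ so that for $f_k=f_{r_k,x_0,\al}$, $g_k=g_{r_k,x_0,\al}$ we have $\|g_k\|/\|f_k\|\to 1$. Lemma~\ref{l:f}(b) shows $f_k/\|f_k\|$ converges weakly to zero in $L^2(X,m)$, and since $f_k\in D$ by hypothesis, Proposition~\ref{p:h} applies to the closed form $\E$, giving
\begin{align*}
\lm_0^{\mathrm{ess}}(L)\leq \liminf_{k\to\infty}\frac{\E(f_k)}{\|f_k\|^2}\leq 2c(\al)\lim_{k\to\infty}\frac{\|g_k\|^2}{\|f_k\|^2}=2c(\al).
\end{align*}
Taking $\al\searrow \mu/2$ with the two choices of $c(\al)$ gives the two essential-spectrum bounds. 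There is no real obstacle here: all of the work has been done in Lemmas~\ref{l:f}, \ref{l:NL} and Proposition~\ref{p:h}, and the only thing to verify beyond the pattern of the proof of Theorem~\ref{t:main} is that the constants in Lemma~\ref{l:NL} pass to the limit as $\al$ decreases to the appropriate growth rate, which is immediate by continuity.
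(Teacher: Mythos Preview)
Your proof is correct and follows exactly the approach the paper intends: the paper's own proof simply says the argument is analogous to that of Theorem~\ref{t:main}, invoking Proposition~\ref{p:h}, Lemma~\ref{l:f}, and Lemma~\ref{l:NL}, and you have faithfully expanded this out, correctly noting that the hypotheses \eqref{e:adapted} and $f_{r,x,\al}\in D$ replace the roles of \cite[Lemma~4.7]{FLW} and Lemma~\ref{l:f_in_D}, and that Lemma~\ref{l:SL} is not needed in the purely non-local setting.
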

\begin{proof}The proof follows analogously to the proof of the main theorem from  Proposition~\ref{p:h}, Lemma~\ref{l:f} and Lemma~\ref{l:NL}.
\end{proof}

\section{Applications}\label{s:applications}

\subsection{Weighted graphs}\label{s:graph}

In this section we derive consequences of Theorem~\ref{t:main} and Theorem~\ref{t:jump} for graphs. We briefly introduce the setting and refer for more background to \cite{KL1}.

Let $X$ be a countable discrete set. Every Radon measure of full support on $X$ is given by a function $m:X\to(0,\infty)$. Then, $L^{2}(X,m)$ is the space $\ell^{2}(X,m)$ of $m$-square summable functions with norm $\aV{u}=(\sum_{x}u(x)^{2}m(x))^{\frac{1}{2}}$, $u\in \ell^{2}(X,m)$.
From \cite[Theorem~7]{KL1} it can be seen that all regular Dirichlet forms without killing term are determined by a symmetric map $b:X\times X\to[0,\infty)$ with vanishing diagonal that satisfies
\begin{align*}
\sum_{y\in X}b(x,y)<\infty,\qquad\mbox{for all } x\in X,
\end{align*}
which gives rise to a measure $J$ on $X\times X\setminus d$ by $J=\frac{1}{2}b$. The one half stems from the convention that in the form we consider each edge only once.

The map $b$ can then be interpreted as a weighted graph with vertex set $X$. Namely, the vertices $x,y\in X$ are connected by an edge with weight $b(x,y)$ if $b(x,y)>0$. In this case, we write $x\sim y$. A graph is called \emph{connected} if for all $x,y\in X$ there are vertices $x_i \in X$ such that $x=x_{0}\sim x_{1}\sim\ldots \sim x_{n}=y$.

Let a map $\ow \E:\ell^{2}(X,m)\to[0,\infty]$ be given by
\begin{align*}
    \ow \E(u)=\frac{1}{2}\sum_{x,y\in X}b(x,y)(u(x)-u(y))^{2}.
\end{align*}
The regular Dirichlet form $\E$ associated to $J$ is the restriction of $\ow \E$ to $\overline{C_{c}(X)}^{\|\cdot\|_{\mathcal{E}}}$.
Moreover, let
\begin{align*}
\mbox{ $\E^{\max}=\ow \E\vert_{D^{\max}},\quad D^{\max}=\{u\in \ell^{2}(X,m)\mid \ow \E(u)<\infty\}$}\end{align*}
which is also a Dirichlet form.
We denote the operator arising from  $\E$ by $L$ and the operator arising from $\E^{\max}$ by $L^{\max}$.

Let $\rho$ be an intrinsic pseudo metric on $X$. In this context this is equivalent to  \eqref{e:adapted} (see \cite[Lemma~4.7, Theorem~7.3]{FLW}) which reads as
\begin{align*}
\frac{1}{2}\sum_{y\in X}b(x,y)\rho(x,y)^{2}\leq m(x),\quad x\in X.
\end{align*}

For simplicity we restrict ourselves to the case when $\rho$ takes values in $[0,\infty)$. (Otherwise, we can easily consider the graph componentwise.)

\begin{remark}\label{r:rho} Very often it is convenient to consider intrinsic metrics which satisfy
$\sum_{y\in X}b(x,y)\rho(x,y)^{2}\leq m(x)$ for all $ x\in X$ (i.e., we drop the $\frac{1}{2}$ on the left hand side). For example, in \cite{Hu} an explicit example of such a metric $\rho$ is given, for $x,y \in X$, by
\begin{align*}     \rho(x,y):=\inf\{l(x_{0},\ldots,x_{n})\mid n\geq 1,x_{0}=x, x_{n}=y, x_{i}\sim x_{i-1}, i=1,\ldots,n\}
\end{align*}
where the length $l$ is given by $l(x_{0},\ldots,x_{n})= \sum_{i=1}^{n}\min\{\Deg(x_{i})^{-\frac{1}{2}}, \Deg(x_{i-1})^{-\frac{1}{2}}\}$ and $\Deg(z)=\sum_{w}b(z,w)/m(z)$
is a generalized vertex degree. In this case all estimates in the theorem above can be divided by $2$.
\end{remark}

In general, it is hard to determine whether distance balls with respect to a certain metric are compact, which means finite in the original topology, in the situation of graphs. However, we always have a statement for the operator $L^{\max}$ related to $\E^{\max}$.\medskip

\begin{thm}\label{t:graph}Assume that $b$ is connected and $m(X) = \infty$.
Then,
\begin{align*}    \lm_{0}(L^{\max})\leq\frac{\ow\mu^{2}}{4}\quad\mbox{and}\quad \lm_{0}^{\mathrm{ess}}(L^{\max})\leq \frac{\mu^{2}}{4}.
\end{align*}
If $\rho(x,y)\in[\de,1]$ for all $x\sim y$, then
\begin{align*}    \lm_{0}(L^{\max})\leq\frac{{2(e^{\ow\mu/2}-1)^{2}}}{{\de^{2}e^{\ow\mu}}+1} \quad\mbox{and}\quad \lm_{0}^{\mathrm{ess}}(L^{\max})\leq \frac{{2(e^{\mu/2}-1)^{2}}}{{\de^{2}e^{\mu}+1}}.
\end{align*}
\end{thm}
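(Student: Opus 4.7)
The plan is to verify the hypotheses of Theorem~\ref{t:jump} for the Dirichlet form $\E^{\max}$ on $D^{\max}$ and then invoke it directly. Assume $\mu,\ow\mu<\infty$, since the bounds are trivial otherwise. Because $b$ is connected and $\rho$ takes values in $[0,\infty)$, every $x_{0}\in X$ satisfies $\bigcup_{r}B_{r}(x_{0})=X$, so $m(\bigcup_{r}B_{r}(x_{0}))=m(X)=\infty$, which is exactly the hypothesis needed in Theorem~\ref{t:jump} for the essential-spectrum bound. Moreover, $m(B_{r}(x_{0}))<\infty$ for all $r$, and since $B_{r}(x)\subseteq B_{r+\rho(x,x_{0})}(x_{0})$, every distance ball has finite measure.

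On graphs the intrinsic property of $\rho$ coincides with \eqref{e:adapted} via the characterization $\frac{1}{2}\sum_{z\in X}b(y,z)\rho(y,z)^{2}\leq m(y)$ stated just above the theorem (cf.\ \cite[Lemma~4.7, Theorem~7.3]{FLW}). It therefore remains to show that $f:=f_{r,x,\al}\in D^{\max}$ for every $r>0$, $x\in X$ and $\al>0$. Since $f$ is bounded by $e^{\al r}$ and supported on the finite-measure set $B_{2r}(x)$, we have $f\in\ell^{2}(X,m)$. To estimate $\ow\E(f)$, I would apply Lemma~\ref{l:f_Lip}, exploit the symmetry of $b$, and invoke \eqref{e:adapted}:
\[
\ow\E(f)\leq c(\al)\sum_{y\in X}g(y)^{2}\sum_{z\in X}b(y,z)\rho(y,z)^{2}\leq 2c(\al)\sum_{y\in X}g(y)^{2}m(y)<\infty,
\]
where the final finiteness uses that $g$ is bounded and supported on $B_{2r}(x)$. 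Hence $f\in D^{\max}$, and Theorem~\ref{t:jump} applied to $\E^{\max}$ yields $\lm_{0}(L^{\max})\leq\ow\mu^{2}/4$ and $\lm_{0}^{\mathrm{ess}}(L^{\max})\leq\mu^{2}/4$.

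For the refined bounds, I observe that on a graph the measure $J=b/2$ is supported on pairs $(y,z)$ with $y\sim z$, so the hypothesis $\rho(y,z)\in[\de,1]$ for all such pairs is precisely the statement that $\rho$ has jump size in $[\de,1]$ in the sense of Section~\ref{s:nonlocal}. The sharper form of Theorem~\ref{t:jump} then delivers the two remaining inequalities. The only genuinely non-routine step is the verification that the test functions lie in $D^{\max}$, which combines Lemma~\ref{l:f_Lip} with the intrinsic-metric inequality; once this is in place, the theorem is an immediate transcription of Theorem~\ref{t:jump} to the graph setting.
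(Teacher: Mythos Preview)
Your proof is correct and follows essentially the same approach as the paper: verify that $\rho$ satisfies \eqref{e:adapted} and that the test functions $f_{r,x,\al}$ lie in $D^{\max}$, then invoke Theorem~\ref{t:jump}. The paper's proof is terser---it cites Lemma~\ref{l:NL} for the membership $f\in D^{\max}$ rather than re-deriving the estimate, and instead of assuming $\mu,\ow\mu<\infty$ upfront it passes to the realizing sequences and notes that a ball of infinite measure along such a sequence forces the corresponding growth rate to be infinite---but the substance is the same. Your explicit verification that $\bigcup_{r}B_{r}(x_{0})=X$ (via connectedness and $\rho<\infty$) and the direct computation of $\ow\E(f)$ from Lemma~\ref{l:f_Lip} are details the paper leaves implicit.
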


\begin{remark}
In this case where the assumption on the adapted metric above is posed without the $1/2$ on the left hand side, all estimates in the theorem above can be divided by $2$.
\end{remark}

\begin{proof} Let $B_{r_{k}}(x_{0})$, (respectively $B_{\ow r_{k}}(x_{k})$) be a sequence of distance balls that realizes $\mu$ (respectively $\ow \mu$), i.e., $\mu=\lim_{k\to\infty} r_{k}^{-1}\log m(B_{r_{k}}(x_{0}))$ (respectively $\ow\mu=\lim_{k\to\infty} r_{k}^{-1}\log m(B_{\ow r_{k}}(x_{k}))$). If the measure of  $B_{r_{k}}(x_{0})$, (respectively $B_{\ow r_{k}}(x_{k})$) is infinite for some $k$, then $\mu=\infty$ (respectively $\ow\mu=\infty$) and we are done. Otherwise, $f_{r_{k},x_{0},\al},g_{r_{k},x_{0},\al} \in \ell^{2}(X,m)$ (respectively  $f_{\ow r_{k},x_{k},\al}$, $g_{\ow r_{k},x_{k},\al}\in \ell^{2}(X,m)$) and $f_{r_{k},x_{0},\al}\in  D^{\max}$ (respectively  $f_{\ow r_{k},x_{k},\al}\in D^{\max}$) by Lemma~\ref{l:NL}. Thus, the statement follows directly from Theorem~\ref{t:jump}.
\end{proof}

In the case when we know more about the measure or the metric structure we can say something about the operator $L$.  This is the case under either of the following additional assumptions:
\begin{itemize}
  \item [(A)] Every infinite path of vertices has infinite measure.
  \item [(B)] $\rho$ is any adapted path metric on a locally finite graph such that $(X, \rho)$ is metrically complete.
\end{itemize}
In particular, (A) is satisfied if $\inf_{x\in X}m(x)>0$ and (B) is satisfied if all infinite geodesics have infinite length.

 \medskip

\begin{corollary}\label{c:graph}Assume that either (A) or (B) is satisfied. Then, the statement of Theorem~\ref{t:graph} holds for $L=L^{\max}$.
\end{corollary}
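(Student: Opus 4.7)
The plan is to reduce both cases to Theorem~\ref{t:graph} by showing that $L=L^{\max}$, or, equivalently by showing that the test functions $f_{r,x,\al}$ used in the proof of Theorem~\ref{t:jump} in fact already lie in the smaller domain $D$.

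Under (B), a Hopf--Rinow-type theorem for locally finite graphs equipped with an adapted path metric (cf.\ \cite{FLW}) says that metric completeness of $(X,\rho)$ forces every distance ball $B_{r}(x)$ to be a finite set. I would use this to observe that the test functions $f_{r,x,\al}$ and $g_{r,x,\al}$ constructed in Section~\ref{s:Preliminaries} are then supported on a finite set and hence lie in $C_{c}(X)\subseteq D$. Running the proof of Theorem~\ref{t:jump} verbatim with $\E$ in place of $\E^{\max}$---Lemma~\ref{l:NL} bounds $\E(f_{r,x,\al})$ by $\al^{2}\|g_{r,x,\al}\|^{2}$, Lemma~\ref{l:f} handles the ratio $\|g\|/\|f\|$ along appropriate subsequences, and Proposition~\ref{p:h} supplies the essential spectrum bound---immediately yields the two spectral inequalities for $L$.

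Under (A), the strategy is different and proceeds through form uniqueness. The assumption that every infinite path of vertices carries infinite $m$-measure is the classical hypothesis guaranteeing that $C_{c}(X)$ is a form core for $\E^{\max}$ (compare the discussion around Theorem~6 of \cite{KL1}), so that $\E=\E^{\max}$ and consequently $L=L^{\max}$. The asserted bounds for $L$ are then merely a restatement of Theorem~\ref{t:graph}.

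The only nontrivial inputs are the Hopf--Rinow theorem for graphs needed in Case~(B) and the form uniqueness result in Case~(A); I expect the former to be the main obstacle, as one must be careful that the adapted path metric (rather than the natural graph distance) is the one being used. Once these two external statements are in place, no further analytic work is required beyond what has already been done in Section~\ref{s:proof}.
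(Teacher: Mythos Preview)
Your treatment of (A) matches the paper exactly: cite the form-uniqueness theorem from \cite{KL1} to obtain $\E=\E^{\max}$, hence $L=L^{\max}$, and inherit the bounds from Theorem~\ref{t:graph}.

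For (B) the paper does not go through a Hopf--Rinow argument and finite balls; it simply invokes \cite[Theorem~2]{HKMW}, which states directly that metric completeness of $(X,\rho)$ for an adapted path metric on a locally finite graph forces $\E=\E^{\max}$ (and hence $L=L^{\max}$). So in the paper both (A) and (B) are handled identically: cite an external essential-self-adjointness/form-uniqueness result, then apply Theorem~\ref{t:graph}. Your alternative for (B)---use Hopf--Rinow to get finite balls, conclude $f_{r,x,\al}\in C_{c}(X)\subseteq D$, and rerun the variational argument for $\E$---is correct and is essentially the route hinted at in the Remark immediately following the corollary (and is also the mechanism behind Theorem~\ref{t:main} itself). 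Two small caveats: the relevant Hopf--Rinow statement is not in \cite{FLW} but rather in the graph literature (e.g.\ \cite{HKMW} or \cite{Hu}); and your phrase ``or, equivalently'' is not quite right---showing the test functions lie in $D$ yields the spectral bounds for $L$ but does not by itself prove $L=L^{\max}$, which is part of what the corollary asserts. The paper's one-line citation delivers both at once.
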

\begin{proof}
By \cite[Theorem 6]{KL1}, respectively \cite[Theorem 2]{HKMW}, (A), respectively (B), imply that $\E=\E^{\max}$ and $L=L^{\max}$.
\end{proof}

\begin{remark} Under the slightly stronger assumption that connected infinite sets have infinite measure we can prove the corollary directly. Namely, if one of the relevant distance balls is infinite, then it has infinite measure and the exponential volume growth is infinite. In the other case the corollary follows from Theorem~\ref{t:jump}.
\end{remark}

We also recover the result of \cite{Fuj} which already covers \cite{DK,OU}. In their very particular situation, $m$ is the vertex degree and $b$ takes values in $\{0,1\}$. The natural graph distance $d$ is given as the minimum length of a path of edges connecting two vertices where the length is the number of edges contained in the path.
\medskip

\begin{corollary}(Normalized Laplacians)\label{c:normalized} Let $b$ be a connected weighted graph over $(X,n)$, with $n(x)=\sum_{y\in X}b(x,y)$, $x\in X$ and  let $d$ be the natural graph metric. Then, $\lm_{0}^{\mathrm{ess}}(L)\leq 1- 2e^{\ow\mu/2}/({1+e^{\ow\mu}})$ and
$\lm_{0}(L)\leq 1-2e^{\mu/2}/({1+e^{\mu}})$.
\end{corollary}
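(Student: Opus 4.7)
The plan is to obtain Corollary~\ref{c:normalized} as an immediate specialization of Theorem~\ref{t:graph} to the natural graph distance $\rho = d$ together with the normalized measure $m = n$. There are three hypotheses to verify and one piece of algebra to carry out.

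First, I would check that $d$ is adapted in the stronger form of Remark~\ref{r:rho}. Since $m(x) = n(x) = \sum_{y} b(x,y)$ and $d(x,y) = 1$ precisely when $x \sim y$, a one-line computation gives
\begin{align*}
\sum_{y \in X} b(x,y)\, d(x,y)^{2} \;=\; \sum_{y \sim x} b(x,y) \;=\; n(x) \;=\; m(x),
\end{align*}
so $d$ saturates the stronger adapted condition $\sum_{y} b(x,y)\rho(x,y)^{2} \leq m(x)$, and Remark~\ref{r:rho} permits us to divide the estimates of Theorem~\ref{t:graph} by $2$. Second, every edge has $d$-length exactly $1$, so the jump size of $d$ lies in $[1,1]$ and I apply the jump-size bound of Theorem~\ref{t:graph} with $\de = 1$. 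Third, to pass from $L^{\max}$ to $L$, I note that for $m = n$ the operator acts as $Lf(x) = f(x) - n(x)^{-1}\sum_{y} b(x,y) f(y)$, a bounded perturbation of the identity by a Markov averaging; hence $\E = \E^{\max}$ and $L = L^{\max}$ automatically (alternatively, condition (A) of Corollary~\ref{c:graph} would suffice when $\inf_{x} n(x) > 0$).

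Combining these, Theorem~\ref{t:graph} with $\de = 1$, halved via Remark~\ref{r:rho}, yields
\begin{align*}
\lm_{0}(L) \leq \frac{(e^{\ow\mu/2}-1)^{2}}{e^{\ow\mu}+1}, \qquad \lm_{0}^{\mathrm{ess}}(L) \leq \frac{(e^{\mu/2}-1)^{2}}{e^{\mu}+1}.
\end{align*}
The statement of the corollary then follows from the identity
\begin{align*}
\frac{(e^{t/2}-1)^{2}}{e^{t}+1} \;=\; \frac{e^{t}-2e^{t/2}+1}{e^{t}+1} \;=\; 1-\frac{2e^{t/2}}{1+e^{t}}
\end{align*}
applied with $t = \ow\mu$ and $t = \mu$.

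The only real obstacle is bookkeeping. The adapted condition in Theorem~\ref{t:graph} is stated with a $\tfrac{1}{2}$ on the left-hand side, whereas the natural graph distance against $m = n$ saturates the \emph{stronger} condition without that $\tfrac{1}{2}$; forgetting to invoke Remark~\ref{r:rho} to halve the resulting constants would give an estimate off by a factor of $2$ from the sharp form asserted in the corollary. Everything else amounts to substitution and an elementary algebraic rearrangement.
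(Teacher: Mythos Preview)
Your argument is correct and matches the paper's own proof: the paper says ``the natural graph metric is an intrinsic metric for $2L$ and its jump size is exactly $1$,'' which is precisely your observation that $d$ saturates the stronger adapted condition (no $\tfrac{1}{2}$) against $m=n$, together with your use of the remark after Theorem~\ref{t:graph} to halve the constants; both then reduce to the $\de=1$ case and the algebraic identity you wrote. (Incidentally, your placement of $\ow\mu$ with $\lm_0$ and $\mu$ with $\lm_0^{\mathrm{ess}}$ agrees with Theorem~\ref{t:graph}; the swap in the corollary's statement appears to be a typo.)
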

\begin{proof} Clearly, $L$ is a bounded operator and thus $L=L^{\max}$. Moreover,  the natural graph metric is an intrinsic metric for $2L$ and its jump size in exactly $1$. Thus, the statement follows from the previous theorem.
\end{proof}

\subsection{Unweighted graphs and the natural graph distance}\label{s:graph2}

Let $b:X\times X\to\{0,1\}$ and $m\equiv 1$. Then, the operator $L$ becomes the graph Laplacian $\Delta $ acting on $D(\Delta)=\{\ph\in \ell^{2}(X)\mid (x\mapsto\sum_{y\sim x}(\ph(x)-\ph(y)))\in\ell^{2}(X)\}$, see \cite{KL1,Woj1}, as
\begin{align*}
    \Delta\ph(x)=\sum_{y\sim x}(\ph(x)-\ph(y)),
\end{align*}
where $x\sim y$ means that $b(x,y)=1$. By $m\equiv 1$ we have that $m(A)=|A|$ for all $A\subseteq X$. For simplicity we assume that the graph is connected.

\medskip

\begin{thm} \label{t:graph2} Let the $d$ be the natural graph distance on an infinite graph and $B_{r}^{d}=\{x\in X\mid d(x,x_{0})\leq r\}$ for some $x_{0}\in X$ and $r\ge0$. If
\begin{align*}
\liminf_{r\to\infty}\frac{\log |B_{r}^{d}(x_{0})|}{\log r}<3,
\end{align*}
then, $\lm_{0}(\Delta)=\lm_{0}^{\mathrm{ess}}(\Delta)=0$. Moreover, if
\begin{align*}
\limsup_{r\to\infty}\frac{ |B_{r}^{d}(x_{0})|}{ r^{3}}<\infty,
\end{align*}
 then $\lm_{0}^{\mathrm{ess}}(\Delta)<\infty$ and, in particular, $\si_{\mathrm{ess}}(\Delta)\neq \emptyset$.
\end{thm}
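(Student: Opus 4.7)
My plan is to apply Corollary~\ref{c:graph} with Huang's intrinsic metric $\rho$ from Remark~\ref{r:rho}. Since here $m\equiv 1$, any infinite subset of $X$ has infinite measure, so hypothesis (A) of Corollary~\ref{c:graph} holds, giving $L=L^{\max}=\Delta$. Huang's metric is the path metric on $X$ with edge weights $w_{xy}=\min(\deg(x)^{-1/2},\deg(y)^{-1/2})$, intrinsic because $\sum_{y\sim x}w_{xy}^{2}\leq\sum_{y\sim x}\deg(x)^{-1}=1\leq 2$. The task then reduces to comparing the $\rho$-volume growth with the $d$-volume growth.

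The key input is the elementary bound $\deg(y)\leq|B_{d(y,x_{0})+1}^{d}(x_{0})|$: every neighbor of $y$ lies within $d$-distance $d(y,x_{0})+1$ of $x_{0}$. Along a $d$-geodesic $x_{0}=z_{0},z_{1},\ldots,z_{n}=x$ this gives
\[
\rho(x_{0},x)\;\leq\;\sum_{i=1}^{n}\deg(z_{i-1})^{-1/2},
\]
and, optimizing over all paths and using $\deg(z_{i})\leq|B_{i+1}^{d}(x_{0})|$, the lower bound
\[
\rho(x_{0},x)\;\geq\;\sum_{i=1}^{d(x_{0},x)}|B_{i+1}^{d}(x_{0})|^{-1/2}.
\]
The worst case is the anti-tree, where degrees saturate the volume bound ($\deg\sim i^{\alpha-1}$ under $|B_{r}^{d}|\sim r^{\alpha}$) and these estimates give $\rho(x_{0},x)\sim d(x_{0},x)^{(3-\alpha)/2}$ for $\alpha<3$ and $\rho\sim\log d$ at the borderline $\alpha=3$. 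Any deviation from this (smaller degrees) only makes Huang weights larger and $\rho$-balls smaller, so the anti-tree estimate should govern $|B_{R}^{\rho}|$ in general.

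With this comparison both conclusions follow from Corollary~\ref{c:graph}. From the liminf hypothesis I extract $\alpha<3$ and $r_{k}\to\infty$ with $|B_{r_{k}}^{d}|\leq r_{k}^{\alpha}$, so $|B_{R}^{\rho}|$ grows polynomially in $R$ along a subsequence, $\ow\mu=\mu=0$, and Corollary~\ref{c:graph} yields $\lm_{0}(\Delta)=\lm_{0}^{\mathrm{ess}}(\Delta)=0$. Under $|B_{r}^{d}|\leq Cr^{3}$ the same estimate gives $|B_{R}^{\rho}|\lesssim e^{3R}$, so $\mu<\infty$ and Corollary~\ref{c:graph} furnishes $\lm_{0}^{\mathrm{ess}}(\Delta)\leq\mu^{2}/4<\infty$, equivalent to $\se(\Delta)\neq\emptyset$. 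The principal obstacle is converting the anti-tree heuristic into a rigorous general comparison: the naive lower bound above becomes trivial once $\alpha\geq 2$ (the series $\sum i^{-\alpha/2}$ converges), so one must combine it with a careful accounting of how high-degree vertices along a geodesic consume their share of the $d$-ball volume, forcing the average Huang weight on a $d$-geodesic to stay above $i^{-(\alpha-1)/2}$. Making this quantitative for arbitrary graphs---not just the spherically symmetric anti-trees of \cite{KLW}---is the core technical step.
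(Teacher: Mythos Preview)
Your plan is correct and coincides with the paper's approach: the paper also applies Corollary~\ref{c:graph} via Huang's metric $\rho$ (hypothesis~(A) holding since $m\equiv1$) and reduces everything to a comparison lemma between $d$-growth and $\rho$-growth. The ``careful accounting'' you flag as the core technical step is exactly what the paper's lemma does---a pigeonhole argument showing that at most $4\eps r_{k}$ of the radii $r\leq r_{k}$ can have large sphere sizes $|S_{r}^{d}|$, so on the remaining $(1-4\eps)r_{k}$ radii one has $\deg\lesssim r^{\alpha-1}$ on $S_{r-1}^{d}\cup S_{r}^{d}$ and hence Huang edge weights $\gtrsim r^{-(\alpha-1)/2}$ across those spheres, yielding $\rho(x_{0},S_{r_{k}}^{d})\gtrsim r_{k}^{(3-\alpha)/2}$ and thus $|B_{R}^{\rho}|\lesssim R^{2\alpha/(3-\alpha)}$; your naive bound $\rho(x_{0},x)\geq\sum_{i}|B_{i+1}^{d}|^{-1/2}$ is indeed too weak once $\alpha\geq2$, and the sphere-based refinement you anticipate is precisely the missing ingredient.
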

\begin{remark}(a) The result above is sharp. This can be seen by the examples of antitrees discussed below the proof.

(b) In \cite[Theorem~1.4]{GHM} it is shown that less than cubic growth implies stochastic completeness.

(c) In the case where the vertex degree is bounded by some $K$, the situation is very different: the $n$ in Corollary~\ref{c:normalized} becomes $\deg$ in our situation, where $\deg:X\to\N$ is the function assigning to a vertex the number of adjacent vertices, and the corresponding normalized operator is $\ow\Delta$ acting on $\ell^{2}(X,\deg)$ as
$\ow\Delta\ph(x)=\frac{1}{\deg(x)}\sum_{y\sim x}(\ph(x)-\ph(y))$. Then,
\begin{align*}
  \lm_{0}(\ow \Delta)\leq\lm_{0}(\Delta)\leq K\lm(\ow\Delta)\quad\mbox{and}\quad    \lm_{0}^{\mathrm{ess}}(\ow \Delta)\leq\lm_{0}^{\mathrm{ess}}(\Delta)\leq K\lm^{\mathrm{ess}}(\ow\Delta),
\end{align*}
see, e.g., \cite{K}. Thus, in the bounded situation, the threshold  lies again at subexponential growth  by Corollary~\ref{c:normalized} (as the measures $m\equiv 1$ and $n=\deg$ also give the same exponential volume growth.) Explicit estimates for the exponential volume growth of planar tessellations in terms of curvature can be found in \cite{KP}.

(d) In the case of bounded vertex degree we also have a threshold for recurrence of the corresponding random walk at quadratic volume growth, see \cite[Lemma~3.12]{Woe}.
\end{remark}

Let $\rho$ be the intrinsic metric from \cite{Hu} introduced above in Remark~\ref{r:rho} which, in the case of unweighted graphs, is given by
$$\rho(x,y)=\inf\{
\sum_{i=0}^{n-1}\min\{\deg(x_{i})^{-\frac{1}{2}}, \deg(x_{i+1})^{-\frac{1}{2}}\}\mid (x_{0},\ldots, x_{n})\mbox{ is a path from $x$ to $y$}\}.$$
Let $ B_{r}^{\rho}=\{x\in X\mid \rho(x,x_{0})\leq r\}$, while $B_{r}^{d}$ are the balls with respect to the natural graph distance $d$.

The proof of the theorem is based on the following lemma which  is inspired by the proof of \cite[Theorem~1.4]{GHM}. Indeed, the second statement is taken directly from there.\medskip

\begin{lemma} If $\liminf\limits_{r\to\infty}{\log |B_{r}^{d}|}/{\log r}=\beta\in[1,3)$,
then $\liminf\limits_{r\to\infty}{\log|B_{r}^{\rho}|}/{\log r}\leq {\frac{2\beta}{3-\beta}}$. Moreover, if $\limsup\limits_{r\to\infty}{ |B_{r}^{d}|}/{ r^{3}}<\infty$, then $\limsup\limits_{r\to\infty}\frac{1}{r}\log|B_{r}^{\rho}|<\infty$.
\end{lemma}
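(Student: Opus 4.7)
The plan is to establish a single geometric lower bound for $\rho(x_0,\cdot)$ in terms of the $d$-sphere sizes, and then exploit it in two slightly different ways for the two statements.

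\textbf{Key inequality.} Write $s_k := |S_k^d|$ with $S_k^d := B_k^d \setminus B_{k-1}^d$, and fix $x$ with $K := d(x_0, x) \geq 1$. I claim
\[
\rho(x_0, x) \;\geq\; \sum_{k=1}^{K} a_k^{-1/2}, \qquad a_k := s_{k-2}+s_{k-1}+s_k+s_{k+1}
\]
(with $s_l := 0$ for $l<0$). Along any path $x_0 = y_0, \ldots, y_n = x$, the integer-valued distance profile $i \mapsto d(x_0, y_i)$ runs from $0$ to $K$ in steps of at most $1$, so by a discrete intermediate value argument, for each $k \in \{1,\ldots, K\}$ there is an index $i_k$ with $d(x_0, y_{i_k-1}) = k-1$ and $d(x_0, y_{i_k}) = k$. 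The $\rho$-weight of this up-edge equals $1/\sqrt{\max(\deg y_{i_k-1}, \deg y_{i_k})}$, and since the neighbors of either endpoint lie in $S_{k-2} \cup S_{k-1} \cup S_k \cup S_{k+1}$, this weight is at least $a_k^{-1/2}$. The $i_k$ are distinct, so summing gives a lower bound for the $\rho$-length of the chosen path; taking the infimum over paths proves the claim.

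\textbf{Proof of the first statement.} Cauchy--Schwarz, together with the telescoping estimate $\sum_{k=1}^K a_k \leq 4\,|B_{K+1}^d|$ (each $s_l$ appears in at most four of the $a_k$), gives the cleaner consequence
\[
\rho(x_0, x) \;\geq\; \frac{K^{3/2}}{2\sqrt{|B_{K+1}^d|}}.
\]
Fix $\eps > 0$ and pick, from the definition of $\liminf$, a subsequence $R_n \to \infty$ with $|B_{R_n}^d| \leq R_n^{\beta + \eps}$. Applying the displayed bound at $K = R_n - 1$ yields $\rho(x_0, z) \geq c\, R_n^{(3-\beta-\eps)/2}$ for every $z \in S_{R_n-1}^d$, with a fixed $c > 0$ and $R_n$ large. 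A second application of the discrete IVT shows that every path from $x_0$ to a vertex $x$ with $d(x_0,x) \geq R_n$ must pass through some such $z$, so in fact $\rho(x_0, x) \geq c\, R_n^{(3-\beta-\eps)/2}$ for all such $x$. Setting $r_n := (c/2)\, R_n^{(3-\beta-\eps)/2}$ therefore forces $B_{r_n}^\rho \subseteq B_{R_n-1}^d$, whence $|B_{r_n}^\rho| \leq R_n^{\beta + \eps}$. Dividing $\log|B_{r_n}^\rho|$ by $\log r_n = \tfrac{3-\beta-\eps}{2}\log R_n + O(1)$ and letting $\eps \downarrow 0$ yields $\liminf_r \log|B_r^\rho|/\log r \leq 2\beta/(3-\beta)$.

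\textbf{Proof of the second statement; main obstacle.} In the cubic regime the displayed global Cauchy--Schwarz bound is tight for uniform $a_k$ and therefore only produces $\rho(x_0, x) \geq $ a positive constant, which is useless. This is the principal difficulty, and the remedy is to apply Cauchy--Schwarz separately on each dyadic scale, recovering the missing logarithmic factor. Fix $C, R_0$ with $|B_R^d| \leq C R^3$ for $R \geq R_0$. For $j$ with $2^j \geq R_0$, the chunk $k \in (2^{j-1}, 2^j]$ has $2^{j-1}$ terms and satisfies $\sum_{k\in \text{chunk}} a_k \leq 4\,|B_{2^j+1}^d| \lesssim 8^j$, so Cauchy--Schwarz on the chunk gives
\[
\sum_{k\in\text{chunk}} a_k^{-1/2} \;\geq\; \frac{(2^{j-1})^{3/2}}{\sqrt{8^{j}\cdot \mathrm{const}}} \;=\; c_0 > 0
\]
independently of $j$. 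Summing over the dyadic scales with $R_0 \leq 2^j \leq K$ produces $\rho(x_0, x) \geq c\log K - O(1)$ for $K$ large. Equivalently, $x \in B_r^\rho$ forces $d(x_0, x) \leq e^{(r + O(1))/c}$, and hence $|B_r^\rho| \leq |B_{e^{(r+O(1))/c}}^d| \lesssim e^{3r/c}$, which is the claimed exponential bound.
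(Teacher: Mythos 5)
Your proposal is correct, and it takes a genuinely different route from the paper. The paper proves the first statement by an exceptional-set argument: writing $|B_{r_k}^d|<r_k^{\al}$ along a subsequence, it shows that the set of radii $r\le r_k$ with $|S_r^d|>\frac{\al}{\eps^{\al}}r^{\al-1}$ has cardinality at most $\eps r_k$, deduces that on a $(1-4\eps)$-fraction of annuli the degree is $O(r^{\al-1})$ so the crossing edge has $\rho$-length $\gtrsim r^{-(\al-1)/2}$, and sums over those good radii to get $\rho(x_0,\cdot)\geq C_0 r_k^{(3-\al)/2}$ on $S_{r_k}^d$; the second statement is simply quoted from \cite{GHM}. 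You instead prove the clean pointwise bound $\rho(x_0,x)\geq\sum_{k=1}^{K}a_k^{-1/2}$ with $a_k=s_{k-2}+s_{k-1}+s_k+s_{k+1}$ and then apply the inequality $\sum_{k=1}^K a_k^{-1/2}\geq K^{3/2}\bigl(\sum_{k=1}^K a_k\bigr)^{-1/2}$ — note this is H\"older with exponents $(3,3/2)$, or Cauchy--Schwarz applied twice, not a single Cauchy--Schwarz, so spell that out — together with the telescoping bound $\sum_k a_k\le 4|B_{K+1}^d|$. This removes all the $\eps$-bookkeeping with the sets $A_k$ and $D_k$, gives a lower bound for $\rho(x_0,\cdot)$ valid at every radius rather than only along the chosen subsequence, and your dyadic-block version of the same inequality yields a self-contained proof of the logarithmic lower bound $\rho(x_0,x)\gtrsim\log d(x_0,x)$ in the cubic regime, i.e., of the second statement which the paper outsources. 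The only points to make explicit in a final write-up are the H\"older step just mentioned and the (easy) verification that the up-edge indices $i_k$ are pairwise distinct because $d(x_0,y_{i_k})=k$ determines $k$; both are already implicit in what you wrote.
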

\begin{proof}Let $S_{r}^{d}=B_{r}^{d}\setminus B_{r-1}^{d}$, $r\geq0$,  and for convenience set $S_{-r}^{d}=B_{-r}^{d}=\emptyset$ for $r>0$.
Let $1\leq\al<3$ and $(r_{k})$ be an increasing sequence such that ${\log |B_{r_{k}}^{d}(x_{0})|}/{\log r_{k}}<\al$ for all $k\ge0$.  Then,
\begin{align*}
    |B_{r_{k}}^{d}|=\sum_{r=0}^{r_{k}}|S_{r}^{d}|< r^{\al}_{k}
\end{align*}
for large $k\ge0$. For $\eps>0$ and $k\geq0$ set
\begin{align*}
A_{k}:=\{r\in[0,r_{k}]\cap \N_{0}\mid |S_{r}^{d}|> \frac{\al}{\eps^{\al}} r^{\al-1}\}.
\end{align*}
We can estimate $|A_{k}|\leq\eps r_{k}$ via
\begin{align*}
r_{k}^{\al}>|B_{r_{k}}^{d}|
\geq\frac{\al}{\eps^{\al}}\sum_{r\in A_{k}}r^{\al-1}
\geq\frac{\al}{\eps^{\al}}\sum_{r=0}^{|A_{k}|}r^{\al-1} \geq\frac{\al}{\eps^{\al}}\int_{0}^{|A_{k}|}r^{\al-1}dr
=\frac{|A_{k}|^{\al}}{\eps^{\al}}.
\end{align*}
Thus,
\begin{align*}
    |\{r\in[1,r_{k}]\cap \N_{0}\mid \max_{i=0,1,2,3}|S_{r-i}^{d}|> \frac{\al}{\eps^{\al}} r^{\al-1}\}|\leq 4\eps r_{k}
\end{align*}
and
\begin{align*}
    |\{r\in[1,r_{k}]\cap \N_{0}\mid \max_{i=0,1,2,3}|S_{r-i}^{d}|\leq \frac{\al}{\eps^{\al}} r^{\al-1}\}|\geq (1-4\eps)r_{k}.
\end{align*}
As we have $\deg \leq |S_{r-1}^{d}\cup S_{r}^{d}\cup S_{r+1}^{d}|$ on $S_{r}^{d}$, we get $|D_{k}|\geq (1-4\eps)r_{k}$, where
\begin{align*}
D_{k}:=\{(r+1)\in[0,r_{k}-1]\cap \N_{0}\mid \deg\leq \frac{3\al}{\eps^{\al}} r^{\al-1}\mbox{ on } S_{r-1}^{d}\cup S_{r}^{d}\}.
\end{align*}
Hence, for $(r+1)\in D_{k}$ we have for $x\in S_{r-1}^{d}$, $y\in S_{r}^{d}$
$$\rho(x,y)\geq  cr^{-\frac{\al-1}{2}},\quad\mbox{ with } c=\sqrt{{\eps^{\al}}/{3\al}}.$$ Since any path from $x_{0}$ to  $S_{r_{k}}^{d}$ contains such edges we have
for any $x\in  S_{r_{k}}^{d}$
\begin{align*}
\rho(x_{0},x)\geq c\sum_{(r+1)\in D_{k}}r^{-\frac{\al-1}{2}}
\geq c \sum_{r=4\eps r_{k}}^{r_{k}-1}r^{-\frac{\al-1}{2}}
\geq c\int_{4\eps r_{k}} ^{r_{k}-1}r^{-\frac{\al-1}{2}} dr\geq C_{0} r_{k}^{{\frac{3-\al}{2}}}
\end{align*}
with $C_{0}>0$ for $\eps>0$ chosen sufficiently small and $r_{k}$ large. Let   $R_{k}:=C_{0} r_{k}^{{\frac{3-\al}{2}}}$ and $C:=C_{0}^{-\frac{2\al}{3-\al}}$. Then, $B_{R_{k}}^{\rho}\subseteq B_{r_{k}}^{d}$ and since $|B_{r_{k}}^{d}|=\sum_{r=0}^{r_{k}}|S_{r}^{d}|< r^{\al}_{k}$, we conclude
\begin{align*}
|B_{R_{k}}^{\rho}|\leq |B_{r_{k}}^{d}|<r^{\al}_{k}\leq CR_{k}^{\frac{2\al}{3-\al}}.
\end{align*}
Thus, the first statement follows. The second statement is shown in the proof of \cite[Theorem~1.4]{GHM}.
\end{proof}

\begin{proof}[Proof of Theorem~\ref{t:graph2}] In the case where the polynomial growth is strictly less than cubic we get by the lemma above that $\mu=0$ with respect to the intrinsic metric $\rho$ and in the case where it is less than cubic we still have $\mu<\infty$. Thus, the statement follows from Corollary~\ref{c:graph}, where (A) is clearly satisfied as $m\equiv 1$.
\end{proof}

Let us discuss the example of antitrees which show the sharpness of the result.  They were first introduced in \cite{Woj3} and further studied in \cite{BK,KLW}.

\begin{example}
An antitree is a spherically symmetric graph, where a vertex in the $r$-th sphere is connected to all vertices in the $(r+1)$-th sphere for $r\ge0$, and there are no horizontal edges. Thus, an antitree is characterized by a sequence $(s_{r})$ taking values in $\N$ which encodes the number of vertices in the sphere $S_{r}^{d}=B_{r}^{d}\setminus B_{r-1}^{d}$.

\emph{Stronger growth than cubic:} In \cite[Corollary~6.6]{KLW} it is shown that if the polynomial volume growth of an antitree is more than cubic, i.e., as $r^{3+\eps}$ for $\eps>0$, then $\lm_{0}(\Delta)>0$ and $\si_{\mathrm{ess}}(\Delta)=\emptyset$. Indeed, in the intrinsic metric $\rho$, these antitrees have finite diameter and thus $\mu=\infty$, see \cite{Hu}.

\emph{Cubic growth:} If the distance spheres of an antitree satisfy $|S_{r}^{d}|=(r+1)^{2}$, then $|B_{r}^{d}|\sim (r+1)^{3}$. Moreover, the function which takes the value $r^{-2}$ on vertices of the $(r-1)$-th sphere, $r\geq1$, is a positive generalized super-solution for $\Delta$ to the value $2$, that is, $\Delta \ph \geq 2 \ph$. Thus, by a discrete Allegretto-Piepenbrink theorem (see \cite[Theorem~4.1]{Woj2} or \cite[Theorem~3.1]{HK}) it follows that $\lm_{0}(\Delta)\geq2$. By Theorem~\ref{t:graph2} we thus have  $2\leq \lm_{0}^{\mathrm{ess}}(\Delta)<\infty$.

\emph{Weaker growth than cubic:} In this case Theorem~\ref{t:graph2} shows that $\lm_{0}(\Delta)=\lm_{0}^{\mathrm{ess}}(\Delta)=0$.
\end{example}



\textbf{Acknowledgements.}  The authors are grateful to J{\'o}zef Dodziuk and Daniel Lenz for their continued support and for generously sharing their knowledge.  The research of RKW was partially sponsored by the Funda\c{c}\~ao para a Ci\^encia e a Tecnologia through project PTDC/MAT/101007/2008 and by the Research Foundation of CUNY through the PSC-CUNY Research Award 42.

\scriptsize


\begin{thebibliography}{10}
\bibitem[BK]{BK} J. Breuer, M. Keller, \emph{Spectral analysis of certain spherically homogeneous graphs}, preprint, 2011, arXiv:1201.0118v1 [math.SP].


\bibitem[Br]{Br} R. Brooks, \emph{A relation between growth and the spectrum of the Laplacian}, Math. Z. \textbf{178} (1981), no. 4, 501--508.


\bibitem[DK]{DK} J. Dodziuk, L. Karp, \emph{Spectral and function theory for combinatorial Laplacians}, Geometry of random motion (Ithaca, N.Y., 1987), Contemp. Math., vol. 73, Amer. Math. Soc., Providence, RI, 1988, pp. 25--40.

\bibitem[FLW]{FLW} R. L. Frank, D. Lenz, D. Wingert, \emph{Intrinsic metrics for non-local symmetric Dirichlet forms and applications to spectral theory}, preprint 2010, arXiv:1012.5050v1[math.FA]. 	

\bibitem[Fuj]{Fuj} K. Fujiwara, \emph{Growth and the spectrum of the Laplacian
    of an infinite graph}, Tohoku Math. J. (2) \textbf{48} (1996), no. 2, 293--302.

\bibitem[Fuk]{Fuk} M. Fukushima, Y. {\=O}shima, M. Takeda, \emph{Dirichlet forms and symmetric Markov processes}, de Gruyter Studies in Mathematics, vol. 19, Walter de Gruyter \& Co., Berlin, 1994.

\bibitem[GHM]{GHM} A. Grigor'yan, X. Huang, J. Masamune, \emph{On stochastic completeness of jump processes}, to appear in: Math. Z.

\bibitem[Gri]{Gri}  G. Grillo, \emph{On Persson's theorem in local Dirichlet spaces}, Z. Anal. Anwendungen \textbf{17} (1998), no. 2, 329--338.

\bibitem[HK]{HK} S. Haeseler, M. Keller, \emph{Generalized solutions and spectrum for Dirichlet forms on graphs},  Random walks, boundaries and spectra, Progr. Prob., vol. 64, Birkh\"auser Verlag, Basel, 2011, pp. 181--199.

\bibitem[HKLW1]{HKLW} S. Haeseler, M. Keller, D. Lenz, R. Wojciechowski,\emph{ Laplacians on infinite graphs: Dirichlet and Neumann boundary conditions}, to appear in: J. Spectr. Theory.

\bibitem[HKLW2]{HKLW2} S. Haeseler, M. Keller, D. Lenz, R. Wojciechowski,\emph{ Graphs of finite measure}, in preparation.

\bibitem[Hi1]{Hi} Y. Higuchi, \emph{Boundary area growth and the  spectrum of discrete Laplacian}, Ann. Global Anal. Geom. {\bf 24} (2003), no. 3, 201--230.

\bibitem[Hi2]{Hi2}  Y. Higuchi, \emph{A remark on exponential growth and the spectrum of the Laplacian}, Kodai Math. J. \textbf{24} (2001), no. 1, 42--47.

\bibitem[Hu]{Hu} X. Huang, \emph{On stochastic completeness of weighted graphs}, Ph.D. thesis, 2011.

\bibitem[HKMW]{HKMW} X. Huang, M. Keller, J. Masamune, R. K. Wojciechowski, \emph{A note on self-adjoint extensions of the Laplacian on weighted graphs}, in preparation.

\bibitem[Ke]{K}  M. Keller, \emph{The essential spectrum of the Laplacian on rapidly branching tessellations}, Math. Ann. \textbf{346}, (2010), no. 1, 51--66.

\bibitem[KL1]{KL1} M. Keller, D. Lenz, \emph{Dirichlet forms and stochastic completeness of graphs and subgraphs}, to appear in: J. Reine Angew. Math. (Crelle's Journal).

\bibitem[KL2]{KL2} M. Keller, D. Lenz, \emph{Unbounded Laplacians on graphs: basic spectral properties and the heat equation}, Math. Model. Nat. Phenom. \textbf{5}, (2010), no. 4, 198--224.

\bibitem[KLW]{KLW} M. Keller, D. Lenz, R. K. Wojciechowski, \emph{Volume growth, spectrum and stochastic completeness of infinite graphs,} preprint 2011, 	 arXiv:1105.0395v1 [math.SP].

\bibitem[KP]{KP} M. Keller, N. Peyerimhoff, \emph{Cheeger constants, growth and spectrum of locally tessellating planar graphs}, Math. Z. \textbf{268} (2011), no. 3-4, 871--886.

\bibitem[LSV]{LSV} D. Lenz, P. Stollmann, I. Veseli{\'c}. \emph{The Allegretto-Piepenbrink theorem for strongly local Dirichlet forms}, Doc. Math. \textbf{14} (2009), 167--189.


\bibitem[OU]{OU} Y. Ohno, H. Urakawa, \emph{On the first eigenvalue of the combinatorial Laplacian for a graph,} Interdiscip. Inform. Sci. \textbf{1} (1994), no. 1, 33--46.

\bibitem[Per]{Per} A. Persson, \emph{Bounds for the discrete part of the spectrum of a semi-bounded Schr\"odinger operator}, Math. Scand. \textbf{8} (1960), 143--153.

\bibitem[Sto]{Sto} P. Stollmann, \emph{A dual characterization of length spaces with applications to Dirichlet metric spaces}, Studia Math. \textbf{198} (2010), no. 3, 221--233.

\bibitem[Stu]{Stu} K.-T. Sturm,
\emph{Analysis on local Dirichlet spaces. I. Recurrence, conservativeness  and $L\sp p$-Liouville properties}, J. Reine Angew. Math., \textbf{456}, (1994) 173--196.

\bibitem[Woe]{Woe} W. Woess, \textit{Random walks on infinite graphs and groups}, Cambridge Tracts in Mathematics, vol. 138, Cambridge University Press, Cambridge, 2000.

\bibitem[Woj1]{Woj1} R. K. Wojciechowski, \emph{Stochastic completeness of graphs}, ProQuest LLC, Ann Arbor, MI, 2008.  Thesis (Ph.D.)--City University of New York.

\bibitem[Woj2]{Woj2} R. K. Wojciechowski, \textit{Heat kernel and
essential spectrum of infinite graphs},  Indiana Univ. Math. J.  {\bf 58} (2009), no. 3, 1419--1441.

\bibitem[Woj3]{Woj3} R.~K.~Wojciechowski, \emph{Stochastically incomplete manifolds and graphs}, Random walks, boundaries and spectra, Progr. Prob., vol. 64, Birkh\"auser Verlag, Basel, 2011, pp. 163--179.
\end{thebibliography}
\end{document}